% SIAM Article Template
%\documentclass[review,onefignum,onetabnum,dvipsnames]{siamart171218}
\documentclass[onefignum,onetabnum,dvipsnames]{siamart171218}

% Information that is shared between the article and the supplement
% (title and author information, macros, packages, etc.) goes into
% ex_shared.tex. If there is no supplement, this file can be included
% directly.
%
% SIAM Shared Information Template
% This is information that is shared between the main document and any
% supplement. If no supplement is required, then this information can
% be included directly in the main document.

% Packages and macros go here
\usepackage[utf8]{inputenc}
\usepackage[english]{babel}
\usepackage{lipsum}
\usepackage{amsfonts}
\usepackage{graphicx}
\usepackage{epstopdf}
\usepackage{algorithmic}
\ifpdf
  \DeclareGraphicsExtensions{.eps,.pdf,.png,.jpg}
\else
  \DeclareGraphicsExtensions{.eps}
\fi
\usepackage{psfrag}
\usepackage{amssymb}
\usepackage[titletoc]{appendix}
\usepackage{amsmath}
\usepackage{mathrsfs}
\usepackage{bm}
\usepackage{changepage}
\usepackage{cleveref}
%\usepackage[notcite,notref]{showkeys}
%\usepackage{showlabels}
%\usepackage[colorlinks=false,breaklinks=true,linkcolor=blue]{hyperref}
%\usepackage{refcheck}

% Used for creating new theorem and remark environments
\newsiamremark{remark}{Remark}
\newsiamremark{hypothesis}{Hypothesis}
\crefname{hypothesis}{Hypothesis}{Hypotheses}
\newsiamthm{claim}{Claim}

% Add a serial/Oxford comma by default.

% ===================   Macros  ======================
\newcommand{\jump}[1]{\left[\!\left[#1\right]\!\right]}
\newcommand{\ave}[1]{\left\{\!\!\left\{#1\right\}\!\!\right\}}
\newcommand{\triple}[1]{|\!|\!|#1|\!|\!|}

% =============================================

% Sets running headers as well as PDF title and authors
\headers{BIE in Transient Thermoelasticity: combined boundary conditions}{G. C. Hsiao and T. S\'anchez-Vizuet}

% Title. If the supplement option is on, then "Supplementary Material"
% is automatically inserted before the title.
\title{Boundary integral formulations for transient linear thermoelasticity with combined-type boundary conditions\thanks{
\funding{Tonatiuh S\'anchez-Vizuet was partially funded by the US Department of Energy. Grant No. DE-FG02-86ER53233.}}}

% Authors: full names plus addresses.
\author{George. C. Hsiao\thanks{Department of Mathematical Sciences, University of Delaware, Newark, DE 19716-2553, USA
  (\email{ghsiao@udel.edu}).}
\and Tonatiuh S\'{a}nchez-Vizuet\thanks{Department of Mathematics, The University of Arizona, Tucson, AZ 85721-0089 USA and Courant Institute of Mathematical Sciences, New York University, New York, NY 10012-1185 USA (\email{tonatiuh@math.arizona.edu}). } }

%% Added on Overleaf: enabling xr
\makeatletter
\newcommand*{\addFileDependency}[1]{% argument=file name and extension
  \typeout{(#1)}% latexmk will find this if $recorder=0 (however, in that case, it will ignore #1 if it is a .aux or .pdf file etc and it exists! if it doesn't exist, it will appear in the list of dependents regardless)
  \@addtofilelist{#1}% if you want it to appear in \listfiles, not really necessary and latexmk doesn't use this
  \IfFileExists{#1}{}{\typeout{No file #1.}}% latexmk will find this message if #1 doesn't exist (yet)
}
\makeatother

%%% END HELPER CODE
%%% Local Variables: 
%%% mode:latex
%%% TeX-master: "ex_article"
%%% End: 

% Optional PDF information
\ifpdf
\hypersetup{
  pdftitle={Time-Domain BIE in Thermoelasticity: mixed boundary conditions},
  pdfauthor={G. C. Hsiao, and T. S\'{a}nchez-Vizuet}
}
\fi

% The next statement enables references to information in the
% supplement. See the xr-hyperref package for details.

%% Use \myexternaldocument on Overleaf
%\myexternaldocument{ex_supplement}

% FundRef data to be entered by SIAM
%<funding-group>
%<award-group>
%<funding-source>
%<named-content content-type="funder-name"> 
%</named-content> 
%<named-content content-type="funder-identifier"> 
%</named-content>
%</funding-source>
%<award-id> </award-id>
%</award-group>
%</funding-group>

\begin{document}
\maketitle

% REQUIRED
\begin{abstract}
We study boundary integral formulations for an interior/exterior initial boundary value problem arising from the thermo-elasto-dynamic equations in a homogeneous and isotropic domain. The time dependence is handled, based on Lubich's approach, through a passage to the Laplace domain. We focus on the cases where one of the unknown fields satisfies a Dirichlet boundary condition, while the other one is subject to conditions of Neumann type. In the Laplace domain, combined single- and double-layer potential boundary integral operators are introduced and proven to be coercive. Based on the Laplace domain estimates, it is possible to prove the existence and uniqueness of solutions in the time domain. This analysis complements previous results that may serve as the mathematical foundation for discretization schemes based on the combined use of the boundary element method and convolution quadrature. 
\end{abstract}

% REQUIRED
\begin{keywords}
Time-domain boundary integral equations, Linear thermoelasticity, Boundary integral operators, Fundamental solution .
\end{keywords}

% REQUIRED
\begin{AMS}
74F05,  74J05,  45P05, 35L05,  65N38, 35J20.
\end{AMS}

%
% ===========================
\section{Introduction}
% ===========================
%
Boundary integral formulations are a well established tool for dealing with stationary and time-harmonic boundary value problems for linear elliptic partial differential equations originating from models of mathematical physics and mechanics. One of the practical advantages of these formulations is the reduction of dimensionality achieved by exploiting Green's second formula and the availability of a Green's function associated to the linear partial differential operator with constant coefficients. Thus reducing the problem into one posed solely on the boundary of the original domain of definition. Moreover, when the original problem is posed in an unbounded domain exterior to a bounded boundary, a boundary integral formulation presents itself as a very attractive alternative amenable for numerical computations.  

However, for the non-stationary case boundary integral formulations had not received much attention until recent years; with much progress been made on their analysis and discretization within the current century. In this work, which concludes the analysis started in \cite{HsSa:2020}, we follow the approach set forth by Lubich \cite{Lu:1988a, LuSc:1992}, and Sayas and co-workers \cite{LaSa:2009a,Sayas:2016}, based on the seminal articles by Bamberger and Ha-Duong \cite{BaHa:1986a, BaHa:1986b}. The central idea of this analysis technique is to transform the problem into the Laplace domain, where Green's functions are readily available and easier to analyze, establish the coercivity of the boundary integral operators involved, and then transfer these results into the time domain. 

This communication is concerned with the application of Time Domain Boundary Integral Methods (TDBIMs) to a non-stationary initial boundary value problem  coming from linear thermoelasticity. It concludes the analysis started in \cite{HsSa:2020}, where we focused on ``pure'' Dirichlet or Neumann boundary conditions. Here, on the contrary, we consider the case of combined boundary conditions 
, i.e. where one of the two unknowns satisfies Dirichlet boundary conditions, while the other one is subject to Neumann type conditions.

We start in \cref{sec:GoverningEquations} by briefly introducing the equations governing the evolution of the displacement and temperature fields in linear thermoelasticity; both in the time domain and the corresponding  transformed formulations in the Laplace domain. The necessary background required to introduce the boundary integral formulation, the layer potential ansatz and the thermoelastic boundary integral operators are presented in \cref{sec:Preliminaries}. Most of the analysis is contained in \cref{sec:CombinedBVP}, where the Laplace-transformed problem is studied and proven to be uniquely solvable. The stability bounds obtained in this section in terms of the Laplace parameter, are then translated into time-domain results and estimates in the final \cref{sec:TimeDomain}.

%
% ============================================
\section{Governing equations}\label{sec:GoverningEquations}
% ============================================
%
% ======================================================
\subsection{The system in the time domain}\label{sec:TimeDomainSystem}
% =======================================================
Let $\Omega^-$ be a bounded domain in $\mathbb{R}^d$ (for $d = 2,3$) with Lipschitz boundary $\Gamma$ and let $\Omega^+ := \mathbb{R}^d \setminus \overline\Omega^-$ be its exterior. We will consider either of the interior or exterior domains to be occupied by a homogeneous, isotropic, linear thermoelastic medium with constant density $\rho$. The elastic properties of the medium are characterized by the Lam\'e parameters $\lambda$ and $\mu$, while the thermal properties are determined by the thermal diffusivity coefficient $\kappa=k/\delta$, given in terms of the thermal diffusivity $k$ and the specific heat $\delta$. 

We are interested in the evolution of the elastic displacement field $\bf {U}$ and temperature variation field $\Theta$ in the aforementioned medium over the time interval $[0,T]$. At the initial time, when the solid is at rest and unperturbed, the distribution of temperatures is given by $\Theta_0$ however, in a thermoelastic medium, elastic stresses give rise to temperature variations and temperature variations produce mechanical stress on the body. In the linear regime, this effect is described by the Duhamel-Neumann law \cite{Duhamel:1837,Maugin:2014} connecting the stress and temperature through the \textit{thermoelastic stress} $\boldsymbol \sigma(\mathbf U,\Theta)$ and the \textit{thermoelastic heat flux} $\mathbf F(\mathbf U,\theta)$
 \[
 \boldsymbol \sigma(\mathbf U, \Theta) := \widetilde{\boldsymbol \sigma} (\mathbf U) - \gamma \, \Theta\, \mathbf I \quad \mbox{and }\quad
\boldsymbol{\mathcal F}(\mathbf U,\Theta):= - \eta\frac{\partial}{\partial t}\mathbf U + \kappa\nabla\Theta.
\]
Here, the identity operator is denoted by $\mathbf I$ and the coupling constants $\gamma$ and $\eta$ are defined by
\[
\gamma := (\lambda + 2/3 \,\mu) \alpha, \qquad \text{ and } \qquad \eta := \gamma \Theta_0 /k,
\]
where $\alpha$ is the volumetric coefficient of thermal expansion, and the quantity $\lambda + (2/3)\,\mu$ is known as the \textit{bulk modulus} of the solid. The purely elastic linearized stress and strain tensors $\widetilde{\boldsymbol {\sigma}}(\mathbf U)$ and $\widetilde{\boldsymbol \varepsilon} (\mathbf U) $  are given as usual by 
\[
 \widetilde{\bm \sigma} ({\bf U}) = (\lambda\;\nabla\cdot{\bf U}) {\bf I} + 2 \mu \; \widetilde{\bm \varepsilon} ({\bf U}) \quad \mbox{and }\quad
\quad \widetilde{\bm \varepsilon}({\bf U}) = \tfrac{1}{2}( \nabla{\bf U} + (\nabla{\bf U})^\top).
\]
In the thermoelastic medium, the given physical constants $  \rho, \lambda, \mu, \gamma, \eta,
\kappa$, are assumed to satisfy the inequalities:
\[
 \rho > 0, \;\;\mu > 0 , \;\;\lambda + 2/3 \mu > 0, \;\;\gamma/\eta > 0, \;\;\kappa > 0.
\]
Under these physically meaningful assumptions, and defining the Lam\'e operator $\Delta^*$ by 
\[
 \Delta^* \mathbf{U} := \mu \Delta \mathbf{U} + (\lambda + \mu) \nabla\,\nabla\cdot \mathbf{U} = \nabla\cdot \widetilde{\bf \bm{\sigma}}(\bf U),
 \]
 the conservation of energy and momentum give rise to the following governing thermo-elasto-dynamic equations   
\begin{subequations}\label{eq:GoverningEquations}
\begin{alignat}{6}
\label{eq:2.1}
\rho \frac{\partial^2\mathbf{U}} {\partial t^2} - \Delta^{*} \mathbf{U} + \gamma \, \nabla~\Theta 
 =\,& \mathbf{0} \qquad& \text{ in } \Omega^\mp\times(0,T), \\ 
\label{eq:2.2}
 \frac{1}{\kappa}\frac{\partial \Theta}{\partial t} - \Delta \Theta + \eta\; \frac{\partial}{\partial t}(\nabla\cdot \mathbf{U}) =\,& 0 \qquad& \text{ in } \Omega^\mp\times(0,T).
 \end{alignat}
which are complemented with the causal initial conditions
\begin{equation}
\label{eq:InitialConditions}
 {\bf U}(x,t) = {\bf 0}, ~ \frac{\partial}{\partial t}{\bf U}(x,t )= {\bf 0}, \quad \mbox{and} \quad {\Theta}(x,t) = 0, 
\end{equation}
for $-\infty < t \leq 0, x \in \Omega^\mp$, together with boundary conditions. In the previous work \cite{HsSa:2020}, we considered the problem where both temperature and displacement field are subject to
\begin{description}
\item (a) Dirichlet boundary conditions: 
\[
{\bf U}(x,t) = {\bf F}(x,t), \quad \mbox{and} \quad \Theta(x,t) = F(x,t)\quad \mbox{ on } \quad \Gamma_T := \Gamma \times (0, T\,], 
\]
\item (b) Neumann boundary conditions:
\[
\boldsymbol{\sigma} ( {\bf U}, \Theta ) {\bf n} = {\bf G}(x,t) \quad\mbox{ and}\quad 
\nabla\,\Theta\cdot\mathbf n  = G(x,t) \quad \mbox{on}\quad \Gamma_T.
\]
\end{description}
for ${\bf F}(x,t)$, $F(x,t)$, ${\bf G}(x,t)$ and $G(x,t)$ are given smooth functions.

In the current work, we will focus on the cases where the boundary conditions are \textit{combined}, meaning that one of the fields is subject to a Dirichlet condition while the other one is subject to a Neumann condition. More precisely, we will be analyzing the cases
\begin{description}
\item (c) Dirichlet-Neumann boundary conditions: 
\begin{equation}\label{eq:DN-Condition}
{\bf U}(x,t) = {\bf F}(x,t), \quad \mbox{and} \quad \nabla\Theta(x,t)\cdot\mathbf n = G(x,t)\quad \mbox{ on } \quad \Gamma_T, 
\end{equation}
\item (d) Neumann-Dirichlet boundary conditions:
\begin{equation}\label{eq:ND-Condition}
\boldsymbol{\sigma} ( {\bf U}, \Theta ) {\bf n} = {\bf G}(x,t) \quad\mbox{ and }\quad \Theta  = F(x,t) \quad \mbox{on}\quad \Gamma_T.
\end{equation}
\end{description}
\end{subequations}
Since the operators involved are linear, it is natural to construct solutions of these problems by using boundary integral methods. However, such an approach requires the availability of a fundamental solution for the time-dependent 
equations (\ref{eq:2.1})  and  (\ref{eq:2.2}). However, time dependent fundamental solutions may not be available in general or may be considerably more difficult to construct and to handle analytically and computationally than those of time-independent equations. This will lead us to consider the techniques set forth by Lubich \cite{Lu:1988a,Lu:1988,Lu:1994} and Sayas \cite{LaSa:2009a, Sayas:2016} which sidestep the need for a time dependent fundamental solution and instead proceed by transforming the system into the Laplace domain, where the analysis is carried out before translating the results obtained back into the time domain. This technique requires carefully studying the dependence of the stability and coercivity bounds for the operators appearing in the Laplace domain in terms of the complex Laplace parameter.
%
% ======================================================
\subsection{The system in the Laplace domain}\label{sec:LaplaceDomainSystem}
% =======================================================
%
Before transforming the problem \cref{eq:GoverningEquations} into the Laplace domain, we must introduce some notation. Throughout the paper, we will say that a vector-valued function $ ({\bf u}, \theta)^{\top} $ defined over a domain $\mathcal O$ is \textit{regular} if $ ({\bf u}, \theta)^{\top} \in C^2(\mathcal O)^4\cap C^1(\overline{\mathcal O})^4$ (also denoted by $ {\bf C}^2(\mathcal O)\cap {\bf C}^1(\overline{\mathcal O})$).

The complex plane will be denoted by $\mathbb{C}$; for the remainder of this article, it should be understood that the Laplace parameter $s$ belongs to the \textit{positive half plane}, i.e.
\[
s \in \mathbb{C}_+ := \{ z \in \mathbb{C} : \mathrm{Re}(z) > 0\}.
\]
Provided it exists, the Laplace transform for an ordinary, causal complex-valued function $F \colon [0, \infty) \to \mathbb{C}$ will be denoted by 
\[
f(s)= \mathcal{L}{F}(s) := \int_0^\infty e^{-st} F(t) dt.
\]
In what follows, we shall adhere to the convention of denoting time-domain functions by a capital letter and their Laplace transforms by the corresponding lower case letter. To make the notation less cumbersome, we will suppress the explicit dependence of the variables with respect to both position and the Laplace parameter. Hence, we will let $\mathbf{u} := \mathbf{u}(x,s)= \mathcal{L}\{ {\bf U}(x,t)\}$ and $ \theta:=\theta(x,s) = \mathcal{L}\{{\Theta(x,t)} \}$. 

Upon Laplace-transformation, equations \cref{eq:2.1} and \cref{eq:2.2} become
\begin{subequations}\label{eq:ThermoElasticLaplaceSystem}
\begin{alignat}{6} 
 \Delta^{*} \mathbf{u} -  \rho s^2 \mathbf{u} - \gamma\, \nabla ~\theta =\,& \mathbf{0} &\quad& \mbox{in}\quad \Omega^\mp, \\
 \Delta \theta - (s/\kappa) ~\theta - s \eta ~\nabla\cdot \mathbf{u} =\,& 0  &\quad& \mbox{in} \quad \Omega^\mp. 
\end{alignat}
\end{subequations}
The system of equations \cref{eq:ThermoElasticLaplaceSystem} can be written in matrix-vector form as
\begin{equation} \label{eq:AdjointThermoealsticMatrix}
{\bf B}(\partial_x , s) \begin{pmatrix}
{\mathbf u} \\
{\theta}\\
\end{pmatrix}
:= \left (
\begin{array} {ll}
  \Delta^* - \rho s^2 & -\gamma \,\nabla \\
  -s \,\eta\, \nabla^{\top} & \Delta -s/\kappa \\
 \end{array} \right )
 \begin{pmatrix}
{\mathbf u} \\
\theta 
\end{pmatrix}
=  \begin{pmatrix} {\bf 0}\\ 0 \end{pmatrix}\quad \mbox{in} \quad \Omega^\mp. 
\end{equation}
Following Kupradze \cite{Ku:1979}, the non-self adjoint matrix-operator ${\bf B}(\partial_x , s)$ will be referred to as the {\it thermoelastic pseudo-oscillation operator}; its adjoint operator ${\bf B}^*(\partial_x , s)$ may be obtained from ${\bf B}(\partial_x , s)$ by replacing $\gamma$ with $ - s\eta $ and vice versa, namely
\begin{equation}\label{eq:AdjointThermoealsticMatrix}
\mathbf{B}^*(\partial_x , s) \begin{pmatrix}
{\boldsymbol v} \\
{ v}\\
\end{pmatrix}
:= \left (
\begin{array} {ll}
  \Delta^* - \rho s^2 & s\,\eta \;\nabla \\
  \gamma~ \; \nabla^{\top} & \Delta - s/\kappa \\
 \end{array} \right )
 \begin{pmatrix}
{\boldsymbol v} \\
 v 
\end{pmatrix}.
\end{equation}
The following result (a Laplace-Domain counterpart of Kupradze's \cite[p.139 ]{Ku:1979} and Cakoni's \cite{Cakoni:2000}) splits the regular solution to the thermoelastic oscillation equations \cref{eq:ThermoElasticLaplaceSystem} into three wave-like functions. We  refer the reader to \cite{HsSa:2020} for the proof.
\begin{lemma}
The regular solution $ ({\bf u}, \theta)$ of \cref{eq:AdjointThermoealsticMatrix} admits in the domain of regularity a representation 
of the form:
\begin{equation}\label{eq:2.6}
 (\mathbf{u}, \theta) = (\mathbf{u}_1, \theta_1) + (\mathbf{u}_2, \theta_2) + (\mathbf{u}_3, \theta_3), 
 \end{equation}
with $({\bf u}_k, \theta_k), k = 1,2,3 $ satisfying 
\begin{alignat*}{8}
(\Delta -\lambda^2_1)\mathbf{u}_1 =\,& {\bf 0}, \qquad& (\Delta -\lambda^2_2)\mathbf{u}_2 =\,& {\bf 0}, \qquad& (\Delta -\lambda^2_3) \mathbf{u}_3 =\,& {\bf 0}, \\
\nabla \times \mathbf{u}_1 =\,& {\bf 0}, \qquad& \nabla\times\mathbf{u}_2 =\,& {\bf 0}, \qquad& \nabla\cdot \mathbf{u}_3 =\,& 0, \\
(\Delta -\lambda^2_1)\theta_1 =\,& 0, \qquad& (\Delta -\lambda^2_2)\theta_2 =\,& 0, \qquad& \theta_3 =\,& 0.
\end{alignat*} 
The constants $\lambda^2_1, \lambda^2_2 $ are determined by the equations:
\begin{equation}
\lambda^2_1 + \lambda^2_2  = \frac{s}{\kappa} ( 1 + \epsilon) + \lambda^2_p, \qquad \qquad \lambda^2_1 \lambda^2_2 \;= \left(\frac{s}{\kappa}\right) \lambda^2_p , \label{eq:2.7a} 
\end{equation} 
with $ \lambda^2_3 = \rho s^2/ \mu, \; \lambda^2_p = \rho s^2/ (\lambda + 2 \mu)$ and $\epsilon = \gamma\, \eta\, \kappa / (\lambda + 2 \mu)$.
\end{lemma}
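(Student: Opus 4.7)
The strategy is a spatial Helmholtz decomposition followed by the operator-theoretic splitting of a fourth-order scalar equation into two Helmholtz factors.

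First, decompose the displacement as $\mathbf{u} = \mathbf{u}^I + \mathbf{u}^S$ with $\nabla \times \mathbf{u}^I = \mathbf{0}$ and $\nabla \cdot \mathbf{u}^S = 0$. Since $\Delta^* \mathbf{u}^S = \mu\Delta\mathbf{u}^S$ whenever $\nabla\cdot\mathbf{u}^S = 0$, and the coupling term $\gamma\nabla\theta$ is purely a gradient, the solenoidal projection of the first equation in \cref{eq:ThermoElasticLaplaceSystem} yields $\mu \Delta \mathbf{u}^S - \rho s^2 \mathbf{u}^S = \mathbf{0}$. This identifies $\mathbf{u}_3 := \mathbf{u}^S$, which satisfies $(\Delta - \lambda_3^2)\mathbf{u}_3 = \mathbf{0}$ with $\lambda_3^2 = \rho s^2/\mu$ and $\nabla\cdot\mathbf{u}_3 = 0$; the uncoupling of temperature from the solenoidal mode then allows us to set $\theta_3 := 0$.

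Next, reduce the remaining irrotational/temperature pair to a decoupled fourth-order scalar system. Taking the divergence of the first equation and using the identity $\nabla\cdot\Delta^*\mathbf{u} = (\lambda + 2\mu)\,\Delta(\nabla\cdot\mathbf{u})$ gives
\begin{equation*}
(\Delta - \lambda_p^2)(\nabla\cdot\mathbf{u}) = \tfrac{\gamma}{\lambda + 2\mu}\Delta\theta,
\end{equation*}
and combining this with $(\Delta - s/\kappa)\theta = s\eta\,\nabla\cdot\mathbf{u}$ from the second equation to eliminate $\nabla\cdot\mathbf{u}$ produces
\begin{equation*}
(\Delta - \lambda_1^2)(\Delta - \lambda_2^2)\theta = 0, \qquad (\Delta - \lambda_1^2)(\Delta - \lambda_2^2)\mathbf{u}^I = \mathbf{0},
\end{equation*}
where the exponents $\lambda_1^2$ and $\lambda_2^2$ are identified from the resulting characteristic polynomial via Vieta's formulas, yielding precisely \cref{eq:2.7a}. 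To obtain the actual decomposition, assume $\lambda_1^2 \neq \lambda_2^2$ and apply the standard factor splitting by setting, for $\{j,k\} = \{1,2\}$,
\begin{equation*}
\mathbf{u}_k := \tfrac{1}{\lambda_j^2 - \lambda_k^2}(\Delta - \lambda_j^2)\mathbf{u}^I, \qquad \theta_k := \tfrac{1}{\lambda_j^2 - \lambda_k^2}(\Delta - \lambda_j^2)\theta.
\end{equation*}
A direct verification shows $\mathbf{u}_1 + \mathbf{u}_2 = \mathbf{u}^I$ and $\theta_1 + \theta_2 = \theta$, each field is annihilated by $(\Delta - \lambda_k^2)$, and the curl-free property of $\mathbf{u}^I$ passes to each $\mathbf{u}_k$ because $(\Delta - \lambda_j^2)$ commutes with $\nabla\times$.

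The main obstacle is the algebraic elimination producing the fourth-order operator and confirming that the constants it generates match \cref{eq:2.7a} exactly; specifically, the quantity $\gamma\eta/(\lambda+2\mu)$ coming from the elimination must combine with $s/\kappa$ to reproduce the precise coupling coefficient $\epsilon = \gamma\eta\kappa/(\lambda+2\mu)$ in the sum $\lambda_1^2 + \lambda_2^2$. A secondary subtlety is the degenerate case $\lambda_1^2 = \lambda_2^2$, but since both roots are analytic functions of $s$ in $\mathbb{C}_+$, this can occur on at most a set of measure zero, where the decomposition extends by continuity (or by replacing the product factorization with a Jordan block).
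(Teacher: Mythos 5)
Your algebraic elimination is sound: taking the divergence of the first equation of \cref{eq:ThermoElasticLaplaceSystem} and combining with the second does give the quartic factorization for $\theta$, and the coefficient bookkeeping works out since $s\gamma\eta/(\lambda+2\mu)=(s/\kappa)\,\epsilon$, so the Vieta relations are exactly \cref{eq:2.7a}. The genuine gap is in the very first step. After writing $\mathbf{u}=\mathbf{u}^I+\mathbf{u}^S$ you assert that ``the solenoidal projection of the first equation yields $\mu\Delta\mathbf{u}^S-\rho s^2\mathbf{u}^S=\mathbf{0}$.'' Substituting the decomposition only gives (irrotational expression) $+$ (solenoidal expression) $=\mathbf{0}$, and since the two classes overlap in the harmonic vector fields (fields that are simultaneously curl-free and divergence-free), you may only conclude that each expression equals such a harmonic field, not that each vanishes; the Helmholtz decomposition is not unique, and in a general domain of regularity (multiply connected, or the unbounded exterior domain) even its existence with the required global properties is not free. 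This unproven projection is what you use both for $(\Delta-\lambda_3^2)\mathbf{u}_3=\mathbf{0}$ and for the equation satisfied by $\mathbf{u}^I$, on which the $\mathbf{u}$-part of your splitting rests, so as written the vector part of the lemma is not established.

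The repair is the classical route followed in the reference the paper cites for this lemma (Kupradze, and \cite{HsSa:2020}), and it avoids the Helmholtz decomposition altogether: from the divergence and the curl of the first equation together with the second one obtains $(\Delta-\lambda_1^2)(\Delta-\lambda_2^2)\nabla\cdot\mathbf{u}=0$, $(\Delta-\lambda_1^2)(\Delta-\lambda_2^2)\theta=0$, and $(\Delta-\lambda_3^2)\nabla\times\mathbf{u}=\mathbf{0}$; rewriting the first equation as $\mu(\Delta-\lambda_3^2)\mathbf{u}=-(\lambda+\mu)\nabla(\nabla\cdot\mathbf{u})+\gamma\nabla\theta$ and applying $(\Delta-\lambda_1^2)(\Delta-\lambda_2^2)$ gives $(\Delta-\lambda_1^2)(\Delta-\lambda_2^2)(\Delta-\lambda_3^2)\mathbf{u}=\mathbf{0}$. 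One then defines $\mathbf{u}_k$ and $\theta_k$ by the Lagrange (partial-fraction) operators applied directly to $\mathbf{u}$ and $\theta$ themselves, e.g. $\mathbf{u}_1=\bigl[(\Delta-\lambda_2^2)(\Delta-\lambda_3^2)\mathbf{u}\bigr]/\bigl[(\lambda_1^2-\lambda_2^2)(\lambda_1^2-\lambda_3^2)\bigr]$, after which the sum property, the Helmholtz equations, and the curl and divergence constraints all follow by direct computation from the three displayed facts. Two smaller points: with your convention $\theta_k:=\frac{1}{\lambda_j^2-\lambda_k^2}(\Delta-\lambda_j^2)\theta$ the sum is $-\theta$, not $\theta$ (the denominator should be $\lambda_k^2-\lambda_j^2$); and the degenerate case $\lambda_1^2=\lambda_2^2$ cannot be dismissed ``by continuity'' because the lemma is a statement for each fixed $s$ — at such exceptional values one must either carry out the confluent (Jordan-type) splitting or exclude them, as the references implicitly do.
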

Just like in elastodynamics, $ c_s:= \sqrt{\mu/\rho}$ and $ c_p:= \sqrt{(\lambda + 2 \mu)/\rho}$
are the phase velocities for the transversal and longitudinal wave, respectively. For most of bodies, the 
parameter $\epsilon \in (0,1) $ is much smaller than unity---see, e.g. \cite{No:1975}. When $\epsilon = 0 $, the deformation and temperature fields decouple. In this case, the wave numbers may be deduced explicitly from eq.\cref{eq:2.7a} 
\[
\lambda^2_1 = \frac{s}{ \kappa}, \qquad \lambda^2_2 =\frac{ \rho\, s^2}{\lambda + 2 \mu}, \qquad \mbox{ and } \qquad \lambda^2_3 = \frac{\rho\, s^2} {\mu}.
\]
%
% =================================================================
\section{Preliminaries}\label{sec:Preliminaries}
% =================================================================
%
% ======================
\subsection{Notation}
% ======================
%
Over either of the open domains $\Omega^\mp$, the standard $\mathrm L^2(\Omega^\mp)$ inner products for scalars, vectors and matrices will be denoted respectively by
\[
(u,v)_{\Omega^\mp} := \int_{\Omega^\mp} uv \;, \quad (\boldsymbol u,\boldsymbol v)_{\Omega^\mp} := \int_{\Omega^\mp} \boldsymbol u \cdot \boldsymbol v \;, \quad (\boldsymbol U,\boldsymbol V)_{\Omega^\mp} := \int_{\Omega^\mp} \boldsymbol U :\boldsymbol V \;,
\]
where $\boldsymbol U :\boldsymbol V$ is the Frobenius inner product for matrices. These inner products induce a natural norm that will be denoted by $\|\cdot\|_{\Omega^\mp}$. We will use the brackets $\langle\cdot,\cdot\rangle_{\Gamma}$ for the $L_2(\Gamma)$ inner products on the boundary $\Gamma := \partial \Omega^\mp$. Inner products for the restrictions of scalars, vectors and matrices to the boundary are defined in a similar fashion. Forms and products will always be assumed to be \textit{bilinear} and complex conjugation will be made explicitly when required.

If a function $u$, defined in either $\Omega^\mp$, is continuously extendible at a point $y \in \Gamma$, then $u^-(y)$ and $u^+(y)$ (alternatively $\{u(y)\}^-$ and $\{u(y)\}^+$) will denote the one sided limits
\[
u^-(y) \equiv \left\{ u(y)\right\}^-:= \lim_{ \Omega^- \ni x \to y \in \Gamma}\, u(x), \;\; \mbox{and} \;\; u^+(y) \equiv \left\{ u(y)\right\}^+:= \lim_{ \Omega^+ \ni x \to y \in \Gamma}\, u(x).
\]
The same notation will be used for one sided limits for vector-valued and matrix-valued extendible functions, where the limits are taken component by component. The \textit{trace} operator, or restriction to the boundary will be denoted by the symbol $\Big |_{\Gamma}$, in order to keep notation as light as possible, its use will be omitted whenever a restriction to the boundary is clear from the context. 

We will rely heavily on standard notation and terminology of Sobolev space theory. Vector-valued spaces will be denoted in boldface, and should be understood as copies of their scalar counterparts therefore, for $d=2,3$. 
\[
\mathbf L^2(\Omega^\mp):= \left(\mathrm L^2(\Omega^\mp)\right)^d, \quad  \mathbf H^1(\Omega^\mp):= \left(\mathrm H^1(\Omega^\mp)\right)^d, \quad \mathbf H^{1/2}(\Gamma):= \left(\mathrm H^{1/2}(\Gamma)\right)^d.
\]
We also adopt the following conventions: for any function $u$ defined on both sides of $\Gamma$ its jump across the boundary $\Gamma$ will be denoted by
\[
\jump{u} := ( u^- -  u^+),
\]
while the average of its traces from inside and outside of $\Gamma$ will be denoted by
\[
\ave{u} := ( u^- + u^+ ) /2.
\]
Just as before, for vector  and matrix valued functions, the jump and average are defined component wise in a similar fashion.

% =================================================================
\subsection{Boundary conditions}\label{sec:BoundaryConditions}
% =================================================================
%
Four basic types of boundary conditions may be imposed on the Laplace-domain system \cref{eq:ThermoElasticLaplaceSystem}. Dirichlet and Neumann boundary conditions  were treated by the authors in \cite{HsSa:2020}. In the current communication,  we will focus on the remaining  two combined Dirichlet and Neumann type of boundary conditions.

In what follows let  $\mathbf{T}: \mathbf H^1(\Omega^\mp) \rightarrow \mathbf H^{-1/2}(\Gamma)$  be the elastic traction operator defined by
\[
\mathbf{T}\, \mathbf{u} : = \widetilde{\bm \sigma} (\mathbf{u}) \;\mathbf{n} = \lambda (\nabla\cdot\mathbf{u} )\mathbf{n} + 2 \mu \frac{\partial \mathbf{u}} {\partial n} + \mu \mathbf{n} \times \nabla \mathbf{u}. 
\]
Using the traction operator, we can define the following operators mapping $\mathbf H^1(\Omega^\mp)\times \mathrm H^1(\Omega^\mp)$ to spaces defined over the boundary, each of them suited for different kinds of boundary conditions.

\begin{description}
\item{\bf I.} Dirichlet boundary conditions. 
\begin{align*}
\mathcal{R}_D :  \mathbf H^1(\Omega^\mp)\times \mathrm H^1(\Omega^\mp) &\longrightarrow  \mathbf H^{1/2}(\Gamma)\times \mathrm H^{1/2}(\Gamma) \\
\left(\mathbf{u}, \theta\right) &\longmapsto \left ( \mathbf{u}, \theta\right)\Big |_{\Gamma}
\end{align*}
\item {{\bf II.}} Neumann boundary conditions. 
\begin{align*} 
\mathcal{R}_N :  \mathbf H^1(\Omega^\mp)\times \mathrm H^1(\Omega^\mp) &\longrightarrow  \mathbf H^{-1/2}(\Gamma)\times \mathrm H^{-1/2}(\Gamma) \\
\left(\mathbf{u}, \theta\right) &\longmapsto \left ( \mathbf{T}\,\mathbf{u} -\gamma\,\theta\, \mathbf{n}\,,\, \partial_n\theta\right )
\end{align*}
\item{{\bf III.}} Dirichlet-Neumann combined boundary conditions.
\begin{align*}
\mathcal{R}_{DN} :  \mathbf H^1(\Omega^\mp)\times \mathrm H^1(\Omega^\mp) &\longrightarrow  \mathbf H^{1/2}(\Gamma)\times \mathrm H^{-1/2}(\Gamma) \\
\left(\mathbf{u}, \theta\right) &\longmapsto \left (\mathbf{u}\Big |_{\Gamma}, \partial_n\,\theta\right) 
\end{align*}
\item{{\bf IV.}} Neumann-Dirichlet combined boundary conditions.
\begin{align}
\nonumber
 \mathcal{R}_{N D} :  \mathbf H^1(\Omega^\mp)\times \mathrm H^1(\Omega^\mp) &\longrightarrow  \mathbf H^{-1/2}(\Gamma)\times \mathrm H^{1/2}(\Gamma) \\
\label{eq:RND}
\left(\mathbf{u}, \theta\right) &\longmapsto \left ( \mathbf{T}\,\mathbf{u} -\gamma\,\theta\, \mathbf{n}\,,\, -\theta \Big |_{\Gamma}\right).
\end{align}
\end{description}
The operators $\mathcal{R}^*_D, \mathcal{R}^*_N, \mathcal{R}^*_{DN},$ and $\mathcal{R}^*_{ND}$---adjoint to the ones defined above---are defined from the previous definitions by the substitution $\gamma \mapsto -\eta\,s$.
 
\textbf{Remark:} Whenever any of the operators defined above acts on functions of two variables, an additional subscript will be added to indicate the relevant variable. Therefore, in an expression of the type $\mathcal R_{N_y} \boldsymbol F(x,y)$, the normal vector $\mathbf n$ should be understood as being a function of $y$, and differentiation should be performed with respect to $y$. This will become specially relevant in \cref{sec:GreenFormulas} and \cref{sec:LayerPotentials}, when layer potentials and boundary integral operators involving the fundamental solution of system \cref{eq:ThermoElasticLaplaceSystem} are defined.
%
% ==================================================
\subsection{Associated Bilinear Forms}
% ==================================================
%
For pairs $(\boldsymbol{v},  v)$ and $ (\mathbf{u}, \theta)$ of regular functions belonging to $\mathbf{C}^2(\Omega^\mp) \cup \mathbf{C}^1(\bar\Omega^\mp)$, we will define the following bilinear operators (complex conjugation will always be done explicitly) associated to the thermoelastic oscillation problem.
\begin{align}
\nonumber
\mathcal{A}_{\Omega^\mp} \left( \left( \mathbf{u}, \theta\right) , \left(\boldsymbol{v}, v\right)\right) &:=
 \int_{\Omega^\mp}\Big( \widetilde{\bm{\sigma}}( \mathbf{u} ) : \widetilde{\bm{ \varepsilon }}( \boldsymbol{ v}) + \rho s^2 \mathbf{u} \cdot \boldsymbol{v} - \gamma\,\theta\nabla\,\cdot\boldsymbol{v} \\
\label{eq:bilinearA}
&  \quad \qquad + \,s\eta \nabla\cdot\mathbf{u}\,  v + \nabla \theta \cdot \nabla  v + (s/\kappa) \theta\,  v \Big), \\
\nonumber
\mathcal{B}_{\Omega^\mp}\left( \left( \mathbf{u}, \theta\right) , \left(\boldsymbol{v}, v\right)\right) &:= \int_{\Omega^\mp} \left(\Delta^{*} \mathbf{u} -  \rho s^2 \mathbf{u} - \gamma\, \nabla ~\theta\right)\cdot\boldsymbol{v} + \left(\Delta \theta - \left(s/\kappa \right) ~\theta - s \eta ~\nabla\cdot \mathbf{u} \right)\,v \\
\label{eq:bilinearB}
& = \int_{\Omega^\mp} (\boldsymbol v, v)\,{\bf B}(\partial_x , s)\,(\mathbf u, \theta)^\top, \\
\nonumber
\mathcal{B}^*_{\Omega^\mp}\left( \left(\boldsymbol{v}, v\right), \left( \mathbf{u}, \theta\right) \right) &:= \int_{\Omega^\mp} \left(\Delta^{*} \boldsymbol{v} -  \rho s^2 \boldsymbol{v} +s\eta\, \nabla ~v\right)\cdot\mathbf{u} + \left(\Delta v - \left(s/\kappa \right) ~v + \gamma ~\nabla\cdot \boldsymbol{v} \right)\,\theta \\
\label{eq:bilinearB*}
& = \int_{\Omega^\mp} (\mathbf u, \theta)\,{\bf B}^*(\partial_x , s)\,(\boldsymbol v, v)^\top.
\end{align}
Note that $\mathcal{B}_{\Omega^\mp}$ (resp. $\mathcal{B}^*_{\Omega^\mp}$) arises from testing the strong form of the system \cref{eq:ThermoElasticLaplaceSystem} (resp. the adjoint system) with  $(\boldsymbol v, v)$ (resp. with $(\mathbf u, \theta)$), while $\mathcal{A}_{\Omega^\mp}$ arises from the weak formulation of  \cref{eq:ThermoElasticLaplaceSystem}. 
%
% ==================================================
\subsection{Green Formulas}\label{sec:GreenFormulas}
% ==================================================
%
Using the notation defined above, the first Green's formula takes the form: 
\begin{equation} \label{eq:FirstGreenFormula}
\mathcal{A}_{\Omega^\mp}\left( \left(\mathbf{u},\theta\right),\left(\boldsymbol{v},v\right)\right) + \mathcal{B}_{\Omega^\mp} \left( \left(\mathbf{u},\theta\right),\left(\boldsymbol{v},v\right)\right)
 = \pm \int_{\Gamma}\mathcal{R}_N\left(\mathbf{u},\theta\right)\,\cdot\left(\boldsymbol{v},v\right)
\end{equation}
while the second Green formula is given by 
\begin{align}\label{eq:SecondGreenFormula}
\mathcal{B}_{\Omega^\mp}\left( \left(\mathbf{u},\theta\right),\left(\boldsymbol{v},v\right)\right)
- \mathcal{B}^*_{\Omega^\mp}\left(\left(\boldsymbol{v},v\right),\left(\mathbf{u},\theta\right)\right)
&= \pm \int_{\Gamma} \left( \mathcal{R}_N\left(\mathbf{u},\theta\right)\cdot\left(\boldsymbol{v},v\right) - \mathcal{R}^*_N\left(\boldsymbol{v},v\right)\cdot\left(\mathbf{u},\theta\right)\right).
\end{align}
If $(\boldsymbol {v}, v)$ is replaced by the fundamental solution $\underline {\underline{\bf E}}(x,y;s)$ for the adjoint operator $\mathbf{B}^*(\partial_y, s)$, the second Green formula \cref{eq:SecondGreenFormula} provides the following integral representation for the solution of \cref{eq:AdjointThermoealsticMatrix}: 
\begin{align} \label{eq:RepresentationFormula}
\left( \mathbf {u}, \theta\right) (x)
& = \pm\int_{\Gamma} \left( \underline {\underline{\bf E}}^{\top}(x,y;s)\cdot {\mathcal R}_N \left( \mathbf {u}, \theta\right) -  \mathcal R^*_{N_y} ~\underline {\underline{\bf E}}^{\top}(x,y;s)\cdot  \left( \mathbf {u}, \theta\right) 
\right) d_y\Gamma , \quad x \in \Omega^\mp.
\end{align}
The explicit expression for the two and three dimensional fundamental solution derivation for the fundamental solution $\underline{\underline{\bf {E}}}(x,y;s)$ can be found in the \cref{sec:FundamentalSolutions}. For a detailed derivation the reader is referred to \cite[Appendix A]{HsSa:2020}.
%
% ================================================
\subsection{Thermoelastic layer potentials and boundary integral operators}\label{sec:LayerPotentials}
% ================================================
%
The integral representation \cref{eq:RepresentationFormula} suggests looking for solutions of the form
\[
\left( \mathbf {u}, \theta\right) (x)
 = \pm\int_{\Gamma} \left( \underline {\underline{\bf E}}^{\top}(x,y;s)\cdot \left( \boldsymbol {\lambda}, \varsigma\right) -  \mathcal R^*_{N_y} ~\underline {\underline{\bf E}}^{\top}(x,y;s)\cdot  \left( \boldsymbol {\phi}, \varphi\right) 
\right) d_y\Gamma , \quad x \in \Omega^\mp,
\]
where $\varphi , {\bm \phi}, \varsigma,$ and ${\bm \lambda}$ are unknown \textit{density functions} defined over $\Gamma$ and satisfying certain regularity properties to be specified later. This process is known as \textit{direct method} and, when dealing with standard Dirichlet or Neumann boundary value problems, has the property that the densities coincide respectively with the trace of the temperature field, the trace of the elastic displacement, the normal heat flux and the normal elastic traction.  

Each of the terms in \cref{eq:RepresentationFormula}, taken separately, also define functions that satisfy the system \cref{eq:ThermoElasticLaplaceSystem}. They are known as the \textit{simple layer potential} (or single layer potential)  $\mathcal S(s)$ and \textit{double layer potential} $\mathcal D(s)$
\begin{alignat*}{6}
 \mathcal{S} (s)\left(\boldsymbol{\lambda},\varsigma\right)^{\top}(x) &:= 
 \int_{\Gamma} \underline {\underline{\bf E} } (x,y;s)\left(\boldsymbol{\lambda},\varsigma\right)^{\top} (y)\, d_y{\Gamma} &\qquad x \not \in \Gamma,\\
 \mathcal{D} (s)\left(\boldsymbol{\phi},\varphi\right)^{\top} (x) &:= 
 \int_{\Gamma} \left(\mathcal{R}^{*}_{N_y} ~\underline {\underline{\bf E}}^{\top}(x,y;s)\right)^{\top}
\left(\boldsymbol{\phi},\varphi\right)^{\top}(y) \,d_y{\Gamma} &\qquad x \not \in \Gamma.
\end{alignat*}
These operators are well suited to be used as ansatz for thermoelastic problems where both the temperature and displacement fields satisfy either Dirichlet or Neumann boundary conditions. To handle cases where one of the unknowns satisfies a Dirichlet-type condition and the other one a Numann-type condition we define the following layer potentials:
\begin{alignat*}{6}
 \mathcal{Q}_{SD}(s)~\left(\boldsymbol{\lambda},\varphi\right)^{\top} (x) &:=
 \int_{\Gamma} \left(\mathcal{R}^{*}_{DN_y} \underline {\underline{\bf E}}^{\top}(x,y;s)\right)^{\top} \left(\boldsymbol{\lambda},\varphi\right)^{\top}(y)\, d_y{\Gamma} \, &\qquad x \not \in \Gamma, \\
 \mathcal{Q}_{DS}(s)\left(\boldsymbol{\phi},\varsigma\right)^{\top} (x) &:=
 \int_{\Gamma} \left(\mathcal{R}^{*}_{ND_y} \underline {\underline{\bf E}}^{\top}(x,y;s)\right)^{\top}\left(\boldsymbol{\phi},\varsigma\right)^{\top} (y)\, d_y{\Gamma} \, &\qquad x \not \in \Gamma.
\end{alignat*}
The definition of the layer potentials can be made more symmetric by noting that, since $\mathcal{R}^{*}_{D_y} = \mathcal{R}_{D} = \mathbf{I}_{d+1} $, the simple-layer potential can also be expressed as 
\begin{align*}
 \mathcal{S} (s)\left(\boldsymbol{\lambda},\varsigma\right)^{\top}(x) &:=
 \int_{\Gamma}\left( \mathcal{R}^{*}_{D_y}\underline {\underline{\bf E}}^{\top}(x,y;s)\right)^{\top}\left(\boldsymbol{\lambda},\varsigma\right) (y)\, d_y{\Gamma}, \, \qquad x \not \in \Gamma. 
\end{align*}
These four thermoelastic layer potentials give rise to the operators that will be used to reformulate our problem as a system of boundary integral equations. In the sequel, for simplicity ${\top}$ superscripts applied to pairs of density functions will be suppressed when there is no confusion in the context.   For density functions $\boldsymbol \phi \in \mathbf H^{1/2}(\Gamma)$, $\varphi \in \mathrm H^{1/2}(\Gamma)$, $\boldsymbol \lambda \in \mathbf H^{-1/2}(\Gamma)$, and $\varsigma \in \mathrm H^{-1/2}(\Gamma)$, we define the following boundary integral operators:

\begin{description}
\item{\bf I.} Single-layer operator.
\begin{align*}
\mathcal V(s) :\, \mathbf H^{-1/2}(\Gamma) \times \mathrm H^{-1/2}(\Gamma) &\longrightarrow  \mathbf H^{1/2}(\Gamma) \times \mathrm H^{1/2}(\Gamma) \\
 (\boldsymbol \lambda, \varsigma) &\longmapsto \{\mathcal R_D\mathcal S(s) (\boldsymbol \lambda, \varsigma)\}^{\mp}.
\end{align*} 
\item{\bf II.} Transposed Double-layer operator.
\begin{align*}
\mathcal K^\prime(s) :\, \mathbf H^{-1/2}(\Gamma) \times \mathrm H^{-1/2}(\Gamma) &\longrightarrow  \mathbf H^{-1/2}(\Gamma) \times \mathrm H^{-1/2}(\Gamma) \\
(\boldsymbol \lambda, \varsigma) &\longmapsto \ave{\mathcal R_N \mathcal S(s) (\boldsymbol \lambda, \varsigma)}.
\end{align*}
\item{\bf III.} Double-layer operator.
\begin{align*}
\mathcal K(s) :\, \mathbf H^{1/2}(\Gamma) \times \mathrm H^{1/2}(\Gamma) &\longrightarrow  \mathbf H^{1/2}(\Gamma) \times \mathrm H^{1/2}(\Gamma) \\
(\boldsymbol \phi, \varphi) &\longmapsto \ave{\mathcal R_D\mathcal D(s) (\boldsymbol \phi, \varphi) }.
\end{align*}
\item{\bf IV.} Hypersingular operator.
\begin{align*}
\mathcal W(s) :\, \mathbf H^{1/2}(\Gamma) \times \mathrm H^{1/2}(\Gamma) &\longrightarrow  \mathbf H^{-1/2}(\Gamma) \times \mathrm H^{-1/2}(\Gamma) \\
(\boldsymbol \phi, \varphi) &\longmapsto - \{\mathcal R_N\mathcal D(s) (\boldsymbol \phi, \varphi) \}^{\mp}.
\end{align*}
\item{\bf V.} Single-Double layer combined operator.
\begin{align*}
\mathcal C_{SD}(s) :\, \mathbf H^{-1/2}(\Gamma) \times \mathrm H^{1/2}(\Gamma) &\longrightarrow  \mathbf H^{1/2}(\Gamma) \times \mathrm H^{-1/2}(\Gamma) \\
(\boldsymbol \lambda, \varphi) &\longmapsto \{\mathcal R_{DN}\mathcal Q_{\mathcal {SD}}(s) (\boldsymbol \lambda, \varphi)\}^{\mp}.
\end{align*}
\item{\bf VI.} Double-Single layer combined operator.
\begin{align*}
\mathcal C_{DS}(s) :\, \mathbf H^{1/2}(\Gamma) \times \mathrm H^{-1/2}(\Gamma) &\longrightarrow  \mathbf H^{-1/2}(\Gamma) \times \mathrm H^{1/2}(\Gamma) \\
(\boldsymbol \phi, \varsigma) &\longmapsto \{\mathcal R_{ND}\mathcal Q_{\mathcal{DS}}(s) (\boldsymbol \phi, \varsigma) \}^{\mp}.
\end{align*}
\end{description}
The single-layer and hypersingular operators appear when using the representation formula \cref{eq:RepresentationFormula} as ansatz (i.e. the direct method). The double layer operator and its transpose appear when using the indirect method to study ``pure" Dirichlet or Neumann boundary value problems; they have been studied in \cite{HsSa:2020}. The last two operators will appear when studying problems in which one of the unknowns satisfies Dirichlet boundary conditions and the other one Neumann boundary conditions; they are the subject of the present article and will be studied in detail in the next section.
%
% ==================================================== 
\section{ Combined Boundary Value Problems}\label{sec:CombinedBVP} 
% ===================================================
%
% ==========================================
\subsection{Energy norm in the Laplace domain}
% =========================================
%
 For the remainder of the article, it will be convenient to use the following notation for the real part of the Laplace parameter
\[
\sigma : = \mathrm{Re}(s) \qquad\text{ and }\qquad \underline{\sigma}:= \min\{1, \mathrm{Re}(s)\}.
\]
The following $s$-dependent energy norms are well suited for the Laplace domain analysis (see, e.g. \cite{HsSa:2020, HsSaSa:2016, HsSaSaWe:2016}) will be used in what follows. 
\begin{alignat*}{6}
\triple{\mathbf u}_{|s|, \Omega^\mp}^2 &:= \left( \widetilde{\bm{ \sigma}} ( {\mathbf u}), \overline{\widetilde{\bm{\varepsilon}} (\mathbf {u}} ) \right)_{\Omega^\mp} +  \rho \| s \; \mathbf u \|^2_{\Omega^\mp} \quad& \mathbf u &\in {\mathbf H}^1(\Omega^\mp), \\
 \triple{\theta}^2_{|s|, \Omega^\mp} &:= \| \nabla \theta\|^2_{\Omega^\mp} + \kappa^{-1}
 \| \sqrt{ |s|} \; \theta \|_{\Omega^\mp}^2 \quad& \theta &\in H^1(\Omega^\mp), \\
 \triple{ ({\bf u}, \theta)}^2_{|s|, \Omega^\mp} &:= \triple{\mathbf u}^2_{|s|, \Omega^\mp} + \triple{\theta}^2_{|s|, \Omega^\mp}
 \quad & ({\bf u}, \theta) &\in {\mathbf H}^1(\Omega^\mp)\times H^1(\Omega^\mp). 
 \end{alignat*}
We note that these norms are equivalent for any value of $s$, as established by the following relations
\begin{subequations}\label{eq:4.6}
\begin{align} 
\label{eq:4.6a}
\underline{\sigma }\triple{\mathbf{u}}_{1, \Omega^\mp} &\leq \triple{\mathbf{u}}_{|s|, \Omega^\mp} \leq
\frac{|s|}{\underline{\sigma}} \triple{\mathbf{u}}_{1, \Omega^\mp} ,\\
\label{eq:4.6b}
\sqrt{\underline{\sigma}} \triple{\theta}_{1, \Omega^\mp} &\leq \triple{\theta}_{|s|, \Omega^\mp} \leq 
\sqrt{\frac{|s|}{\underline {\sigma} } } \triple{\theta}_{1,\Omega^\mp},\\
 \label{eq:4.6c}
\underline{\sigma} \triple{({\bf u}, \theta)}_{1, \Omega^\mp} &\leq \triple{({\bf u}, \theta)}_{|s|, \Omega^\mp} \leq \frac{|s|}{{\underline{\sigma}}^{3/2}}\triple{({\bf u}, \theta)}_{1, \Omega^\mp} ,
 \end{align}
 \end{subequations}
which can be obtained from the inequalities: 
 \[
\underline{\sigma} \leq \min\{1, |s|\},\quad \mbox{and} \quad \underline{\sigma}\,\max\{1, |s|\}
 \leq |s|,~ \;\forall s \in \mathbb{C}_+.
 \]
We remark that for $s=1$, the norm $\triple{\theta}_{1, \Omega^-} $ is equivalent to $\|\theta\|_{H^1(\Omega^-)} $ and so is the energy norm $\triple{ \mathbf{u}}_{1, \Omega^-}$ equivalent to the $\mathbf{H}^1(\Omega^-)$-norm of ${ \mathbf{u}} $ by the second Korn inequality \cite{Fi:1972}. 
 %
 % ====================================================================
 \subsection{Boundary integral formulation and transmission problems}
% =====================================================================
%
We are interested in the thermoelastic oscillation problem \cref{eq:ThermoElasticLaplaceSystem} with combined boundary conditions. In order to rigorously state the distributional version of the problem, we will first define the spaces
\begin{align*}
\mathbf H^1_{\Delta^*}(\Omega^\mp) &:= \{\mathbf u \in \mathbf H^1(\Omega^\mp): \Delta^*\mathbf u \in \mathbf L^2(\Omega^\mp)\}, \\
\mathrm H^1_{\Delta}(\Omega^\mp) &:= \{ u \in \mathbf H^1(\Omega^\mp): \Delta u \in \mathbf L^2(\Omega^\mp)\}.
\end{align*}
Given either $(\mathbf f, g) \in\mathbf H^{1/2}(\Gamma) \times \mathrm H^{-1/2}(\Gamma)$ for the combined Dirichlet-Neumann problem, or $ (\mathbf g, f) \in\mathbf H^{-1/2}(\Gamma) \times \mathrm H^{1/2}(\Gamma) $ for the combined Neumann-Dirichlet problem, we seek a pair $(\mathbf u, \theta)\in \mathbf H^1_{\Delta^*}(\Omega^\mp)\times \mathrm H^1_{\Delta}(\Omega^\mp)$ satisfying
\begin{subequations}\label{eq:CombinedProblems}
\begin{equation}\label{eq:CombinedProblemPDE}
\mathbf B (\partial_x,s) (\mathbf u, \theta) = \mathbf 0 \quad \text{ in } \quad \mathbf L^2(\Omega\mp) \times \mathrm L^2(\Omega\mp),
\end{equation}
and either set of boundary conditions:
\begin{alignat}{6}
\label{eq:CombinedDirichletNeumann}
\mathcal R_{DN}(\mathbf u, \theta) &= (\mathbf f, g) &&\; \text{ in } \mathbf H^{1/2}(\Gamma) \times \mathrm H^{-1/2}(\Gamma) &\quad \text{(Dirichlet-Neumann problem),} \\
\label{eq:CombinedNeumannDirichlet}
\mathcal R_{ND}(\mathbf u, \theta) &= (\mathbf g, f) &&\; \text{ in } \mathbf H^{-1/2}(\Gamma) \times \mathrm H^{1/2}(\Gamma) &\quad \text{(Neumann-Dirichlet problem).}
\end{alignat}
\end{subequations}
All the equalities above must be understood in the sense of distributions.

When looking for solutions the problems \cref{eq:CombinedProblems} either of the combined layer potentials $\mathcal Q_{SD}$ and $\mathcal Q_{DS}$ defined in \cref{sec:LayerPotentials} can be used as an ansatz since, as it turns out, any function defined in terms of them in fact satisfies the distributional PDE. In fact, direct---if tedious---computations show that the follwing results hold for slightly different transmission problems for the operator $\mathbf B(\partial_z,s)$.
\begin{proposition}[Combined transmission problems] \label{prop:CombinedTransmission}
\\ \textbf{Combined Dirichlet-Neumann problem.} Let $({\boldsymbol{\lambda}}, \varphi) \in \mathbf H^{-1/2}(\Gamma) \times \mathrm H^{1/2}(\Gamma) $ and consider the function
\[
(\mathbf u_{\boldsymbol\lambda}, \theta_{\varphi}) :=  \mathcal{Q}_{SD} (s)(\boldsymbol{\lambda}, 
 \varphi) \in {\bf H}^1(\mathbb{R}^d\setminus \Gamma) \times \mathrm H^1(\mathbb{R}^d\setminus \Gamma).
 \]
Then 
 \begin{subequations} \label{eq:6.3}
\begin{align}
\mathbf{B}(\partial_x , s) (\mathbf u_{\boldsymbol\lambda}, {\theta}_{\varphi}) 
& = {\bf 0} \qquad \mbox{in}\quad \mathbb{R}^d \setminus \Gamma, \label{eq:6.3a}\\
\jump{\mathcal{R}_{DN} (\mathbf u_{\boldsymbol\lambda}, \theta_{\varphi}) } & = {\bf 0},\label{eq:6.3b}\\
\jump{ \mathcal{R}_{ND} (\mathbf u_{\boldsymbol\lambda}, \theta_{\varphi})} & = (\boldsymbol{\lambda}, \varphi). \label{eq:6.3c}
\end{align}
\end{subequations}
\\ 
\textbf{Combined Neumann-Dirichlet problem.} Similarly, let $ ({\bm{\phi}}, \varsigma) \in {\bf H}^{1/2}(\Gamma) \times \mathrm H^{-1/2}(\Gamma) $ and consider the function
\[
(\mathbf u_{\boldsymbol\phi}, \theta_{\varsigma}) :=  \mathcal{Q}_{DS}(s)(\boldsymbol{\phi}, 
 \varsigma) \in {\bf H}^1(\mathbb{R}^d\setminus \Gamma) \times \mathrm H^1(\mathbb{R}^d\setminus \Gamma).
 \]
Then 
\begin{subequations} \label{eq:6.9}
\begin{align}
\mathbf{B}(\partial_x , s) (\mathbf u_{\boldsymbol\phi}, {\theta}_{\varsigma}) 
& = {\bf 0}\qquad \mbox{in}\quad \mathbb{R}^d \setminus \Gamma, \label{eq:6.9a}\\
\jump{\mathcal{R}_{DN}(\mathbf u_{\boldsymbol\phi}, \theta_{\varsigma})} & =(\boldsymbol{\phi}, \varsigma) , \label{eq:6.9b}\\
\jump{\mathcal{R}_{ND} (\mathbf u_{\boldsymbol\phi}, \theta_{\varsigma})} & = {\bf 0}. \label{eq:6.9c}
\end{align}
\end{subequations}
\end{proposition}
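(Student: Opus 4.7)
My plan is to reduce both sets of identities to the classical jump relations for the single- and double-layer potentials of the uncoupled elastic pseudo-oscillation operator $\Delta^{*}-\rho s^{2}$ and the scalar operator $\Delta - s/\kappa$, which were already exploited in \cite{HsSa:2020} to handle the pure Dirichlet and pure Neumann cases. The combined potentials $\mathcal Q_{SD}$ and $\mathcal Q_{DS}$ are built from the same fundamental solution $\underline{\underline{\mathbf E}}$ but through the mixed trace operators $\mathcal R_{DN}^{*}$ and $\mathcal R_{ND}^{*}$ in the integration variable $y$. Decomposing $\underline{\underline{\mathbf E}}$ into its diagonal (elastic/elastic and thermal/thermal) and off-diagonal (coupling) blocks will isolate the pieces that behave like classical single and double layers, while the corrections from the off-diagonal blocks will turn out to be precisely compensated by the $-\gamma\,\theta\,\mathbf n$ coupling term built into $\mathcal R_{N}$ (and inherited by $\mathcal R_{ND}$).

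\textbf{Step 1: the PDE off $\Gamma$.} For any fixed $x\notin\Gamma$ the kernels of $\mathcal Q_{SD}(s)$ and $\mathcal Q_{DS}(s)$ are smooth in $y\in\Gamma$. By the defining property of the fundamental solution, each column of $\underline{\underline{\mathbf E}}^{\top}(x,y;s)$ satisfies $\mathbf B(\partial_{x},s)\underline{\underline{\mathbf E}}^{\top}(x,y;s)=\mathbf 0$ for $x\neq y$. Since $\mathcal R_{DN_{y}}^{*}$ and $\mathcal R_{ND_{y}}^{*}$ act only in $y$, they commute with $\mathbf B(\partial_{x},s)$, and dominated convergence justifies differentiating under the integral sign for $x$ bounded away from $\Gamma$. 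This yields \cref{eq:6.3a} and \cref{eq:6.9a} at once.

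\textbf{Step 2: the jump identities.} Using the explicit representation of $\underline{\underline{\mathbf E}}$ recalled in \cref{sec:FundamentalSolutions}, which is built from the three Helmholtz components in \cref{eq:2.6} with wave numbers $\lambda_{1},\lambda_{2},\lambda_{3}$, one sees that the $d\times d$ elastic block has the same leading singularity as the elastic pseudo-oscillation fundamental solution and the scalar thermal block as the Yukawa-type fundamental solution, while the off-diagonal blocks are one order smoother because each carries a factor of the coupling constants $\gamma$ or $s\eta$. Writing $\mathcal Q_{SD}(s)(\boldsymbol\lambda,\varphi)=(\mathbf u_{\boldsymbol\lambda},\theta_{\varphi})$ in terms of this block structure, $\mathbf u_{\boldsymbol\lambda}$ splits as an elastic single-layer potential acting on $\boldsymbol\lambda$ plus a smoother contribution acting on $\varphi$ through the $y$-normal derivative of the elastic/thermal coupling block, and $\theta_{\varphi}$ splits as a thermal double-layer potential of $\varphi$ plus a smoother contribution acting on $\boldsymbol\lambda$ through the thermal/elastic coupling block. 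The classical single-layer jump relations then give continuity of $\mathbf u_{\boldsymbol\lambda}|_{\Gamma}$ and a jump $\boldsymbol\lambda$ in the elastic traction; the classical double-layer jump relations give a jump $\varphi$ (with a sign) in $\theta_{\varphi}$ and continuity of $\partial_{n}\theta_{\varphi}$. Combining these gives \cref{eq:6.3b} and \cref{eq:6.3c}. The argument for $\mathcal Q_{DS}$ is symmetric, with the roles of simple and double layer exchanged between the elastic and thermal components, and yields \cref{eq:6.9b}--\cref{eq:6.9c}.

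\textbf{Main obstacle.} The delicate step is \cref{eq:6.3c} (and its mirror \cref{eq:6.9b}). Because $\theta_{\varphi}$ itself jumps by $\varphi$ across $\Gamma$, the composite term $-\gamma\,\theta_{\varphi}\,\mathbf n$ appearing in $\mathcal R_{ND}$ also jumps, and one must verify that this jump exactly cancels the ``spurious'' traction jump produced by the $y$-normal derivative of the coupling block acting on $\varphi$, so that the first component of $\jump{\mathcal R_{ND}(\mathbf u_{\boldsymbol\lambda},\theta_{\varphi})}$ reduces to exactly $\boldsymbol\lambda$. This cancellation is the structural reason why the natural thermoelastic traction must carry the coupling correction $-\gamma\,\theta\,\mathbf n$; once the cancellation has been verified by direct computation with the explicit kernels of \cref{sec:FundamentalSolutions}, the remaining identities of \cref{prop:CombinedTransmission} follow by routine manipulations.
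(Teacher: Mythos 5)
Your Step~1 (the PDE away from $\Gamma$) is fine and is the standard argument. The gap is in Step~2: the identities \cref{eq:6.3c} and \cref{eq:6.9b} are precisely where your argument stops. You correctly identify that the jump of the composite term $-\gamma\,\theta_{\varphi}\,\mathbf n$ must cancel the traction jump produced by the $y$-normal derivative of the coupling block acting on $\varphi$, but you never verify this; the sentence ``once the cancellation has been verified by direct computation with the explicit kernels'' is the entire content of the claim, so as written the proposal is a sketch whose hardest step is deferred. In addition, the reason you give for the off-diagonal blocks of $\underline{\underline{\mathbf E}}$ being one order smoother (``because each carries a factor of the coupling constants $\gamma$ or $s\eta$'') is not the actual mechanism: the improved regularity comes from the cancellation of the leading $|x-y|^{-1}$ singularity between the $\lambda_1$- and $\lambda_2$-contributions, since these blocks involve $\bigl(e^{-\lambda_1 r}-e^{-\lambda_2 r}\bigr)/(\lambda_1^2-\lambda_2^2)$ (see $\mathbf D_1,\mathbf D_2$ in \cref{sec:FundamentalSolutions}); if you insist on the block-decomposition route, this is exactly what must be quantified before the ``classical'' uncoupled jump relations can be invoked.

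There is a shorter path, and it is the one implicitly used here (the proposition is justified by direct computation, and in the proof of \cref{lem:InverseBounds} the same jump identities are obtained from the continuity properties of the coupled potentials $\mathcal S(s)$ and $\mathcal D(s)$ studied in \cite{HsSa:2020}). Since $\mathcal R^*_{DN}$ acts as the identity on the displacement slot and as $\partial_n$ on the thermal slot, while $\mathcal R^*_{ND}$ acts as the adjoint coupled traction on the displacement slot and as minus the identity on the thermal slot, the combined potentials decompose componentwise as
\[
\mathcal Q_{SD}(s)(\boldsymbol\lambda,\varphi)=\mathcal S(s)(\boldsymbol\lambda,0)+\mathcal D(s)(\boldsymbol 0,\varphi),
\qquad
\mathcal Q_{DS}(s)(\boldsymbol\phi,\varsigma)=\mathcal D(s)(\boldsymbol\phi,0)-\mathcal S(s)(\boldsymbol 0,\varsigma),
\]
so that \cref{eq:6.3b}, \cref{eq:6.3c}, \cref{eq:6.9b}, and \cref{eq:6.9c} follow at once, slot by slot, from the already-established relations $\jump{\mathcal R_D\,\mathcal S}=0$, $\jump{\mathcal R_N\,\mathcal S(\boldsymbol\lambda,\varsigma)}=(\boldsymbol\lambda,\varsigma)$, $\jump{\mathcal R_N\,\mathcal D}=0$, together with the jump of $\mathcal R_D\,\mathcal D$ --- no new kernel estimates are needed. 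In particular, the cancellation you single out as the main obstacle is nothing other than the known fact that $\mathcal R_N\mathcal D$ has coincident one-sided limits (this is what allows $\mathcal W(s)$ to be defined by a two-sided trace), precisely because the coupling correction $-\gamma\,\theta\,\mathbf n$ is already built into $\mathcal R_N$. Finally, ``the argument for $\mathcal Q_{DS}$ is symmetric'' conceals genuine sign bookkeeping: the $\varsigma$-part of $\mathcal Q_{DS}$ enters with a minus sign through the $-\theta$ slot of $\mathcal R^*_{ND}$, and with the convention $\jump{u}=u^--u^+$ the signs in \cref{eq:6.9b} must be tracked explicitly rather than inferred by analogy with the Dirichlet--Neumann case.
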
 
\noindent
Since functions defined in terms of the layer potentials applied to arbitrary pairs $({\boldsymbol{\lambda}}, \varphi) \in {\bf H}^{-1/2}(\Gamma) \times \mathrm H^{1/2}(\Gamma) $  or $ ({\boldsymbol{\phi}}, \varsigma) \in {\bf H}^{1/2}(\Gamma) \times \mathrm H^{-1/2}(\Gamma) $ satisfy the equations \cref{eq:6.3a} and \cref{eq:6.9a} , it only remains to find the appropriate density functions that would satisfy the corresponding boundary conditions of the problem. This process results in a system of boundary integral equations for the unknown densities.

For the combined Dirichlet-Neumann boundary value problem \cref{eq:CombinedProblemPDE} and \cref{eq:CombinedDirichletNeumann}, if we seek a solution in the form of 
\begin{equation}\label{eq:DirichletNeumannRepresentation}
( \mathbf{u}_{\boldsymbol{\lambda}}, \theta_{\varphi}) = 
 \mathcal Q_{SD}(s) \left(\boldsymbol{\lambda},\varphi\right) (x),
\end{equation}
then the boundary condition leads to an integral equation of the first kind for the unknown density  functions 
$ (\boldsymbol{\lambda}, \varphi) $ such that 
\begin{equation}\label{eq:6.5}
\mathcal{R}_{DN}\,\mathcal{Q}_{SD}(s)\left(\boldsymbol{\lambda},\varphi\right) (x) = \mathcal C_{SD} (s)\left(\boldsymbol{\lambda},\varphi\right) = \left( \mathbf{f},g\right), 
\end{equation}
where $({\bf f}, g) \in \mathbf H^{1/2}(\Gamma) \times \mathrm H^{-1/2}(\Gamma) $ is the given boundary  data. In the same manner, for the the combined Neumann-Dirichlet boundary value problem, we propose a solution of the form
\begin{equation}\label{eq:NeumannDirichletRepresentation}
( \mathbf{u}_{\boldsymbol{\phi}}, \theta_{\varsigma}) = 
 \mathcal{Q}_{DS}(s)\left(\boldsymbol{\phi},\varsigma\right) (x),
\end{equation}
then, imposing the boundary condition leads to the following integral equation of the first kind for the unknown density functions $(\bm{\phi}, \varsigma)$ 
 \begin{equation}\label{eq:6.11}
\mathcal{R}_{ND}\,\mathcal{Q}_{DS}(s) \left(\boldsymbol{\phi},\varsigma\right) (x) = \mathcal C_{DS}(s) \left(\boldsymbol{\phi},\varsigma\right) = \left( \mathbf{ g},f\right), 
\end{equation}
where $({\bf g}, f) \in \mathbf {H}^{-1/2}(\Gamma) \times \mathrm H^{1/2}(\Gamma) $ is the given boundary  data. Reciprocally, the solutions to the boundary integral equations above can be recovered from those of the boundary value problems. In view of the equivalence of these two sets of problems (boundary integral formulation vs. boundary value problem), it is enough to prove the existence and uniqueness of one of the two formulations. %
% ======================================================================
\subsection{Existence and Uniqueness}\label{sec:ExistenceAndUniqueness}
% ======================================================================
%

Note that---unlike the solution to the PDE formulation of the thermoelastic problem that is posed only either in the interior domain $\Omega^-$ or in the exterior domain $\Omega^+$---the layer potential ansatz \cref{eq:DirichletNeumannRepresentation} and \cref{eq:NeumannDirichletRepresentation} defined using the solutions of the boundary integral equations \cref{eq:6.5} and \cref{eq:6.11} as densities are defined over the \textit{entire} space $\mathbb R^d\setminus\Gamma$. Therefore, in order to make use of \cref{prop:CombinedTransmission}, we must work with the slightly more general transmission problems. Note that an interior (resp. exterior) boundary value problem can be converted into a transmission problem by extending the solution by zero in the exterior (resp. interior) domain. Once the unique solvability of this problem has been established, we can then use the results to prove that the same holds for the boundary integral formulation.

Let us first define the space
\[
\mathbb H :=\{ (\boldsymbol{v}, v) \in  {\bf H}^1_{\Delta^*} (\mathbb{R}^d \setminus \Gamma ) \times \mathrm H^1_{\Delta}(\mathbb{R}^d\setminus \Gamma): \jump{\mathcal{R}_{D} ( {\boldsymbol v}, v)} = 0 \}.
\]
The weak formulation of the problem will follow from adding the interior and exterior expressions for the first Green formula  \cref{eq:FirstGreenFormula}, which yields
\begin{equation}\label{eq:JumpFirstGreenFormula}
\mathcal{A}_{\mathbb R^d\setminus\Gamma}\left( \left(\mathbf{u},\theta\right),\left(\boldsymbol{v},v\right)\right) + \mathcal{B}_{\mathbb R^d\setminus\Gamma} \left( \left(\mathbf{u},\theta\right),\left(\boldsymbol{v},v\right)\right)
 = \int_{\Gamma}\jump{\mathcal{R}_N\left(\mathbf{u},\theta\right)}\,\cdot\left(\boldsymbol{v},v\right),
\end{equation}
where we have defined the define the bilinear forms
\[
\mathcal A_{\mathbb R^d\setminus\Gamma}(\cdot,\cdot):=\mathcal A_{\Omega^+}(\cdot,\cdot) + \mathcal A_{\Omega^-}(\cdot,\cdot), \;\;\text{ and }\;\; \mathcal B_{\mathbb R^d\setminus\Gamma}(\cdot,\cdot):=\mathcal B_{\Omega^+}(\cdot,\cdot) + \mathcal B_{\Omega^-}(\cdot,\cdot).
\]
Hence, the weak formulation arises by replacing the term $\jump{\mathcal{R}_N\left(\mathbf{u},\theta\right)}$ by the prescribed transmission conditions, and considering that that if a pair $(\mathbf{u},\theta)$ satisfies the distributional form of the problem associated then $\mathcal{B}_{\mathbb R^d\setminus\Gamma} \left( \left(\mathbf{u},\theta\right),\left(\boldsymbol{v},v\right)\right)=0$.
%
%===============================
\paragraph{\textbf{Transmission problems}}
% ==============================
%
Now, for given $(\boldsymbol{\lambda}, \varphi) \in \mathbf{ H}^{-1/2}(\Gamma) \times \mathrm H^{1/2}(\Gamma) $,  let  
$\widetilde{ \varphi}  \in  \mathrm H^1(\mathbb{R}^d\setminus \Gamma)$ be an extension of $\varphi$ such that $\mathcal{Q}_{SD} (s)(\bf{0}, \varphi) = ({\bf 0}, \widetilde\varphi) $. Note that, by construction, $(\boldsymbol 0,\widetilde\varphi)$ satisfies \cref{eq:6.3a}  and $\jump{\partial_n \widetilde{\varphi}} = 0$. 

Motivated by \cref{eq:JumpFirstGreenFormula}, we will say that a pair
 $(\mathbf{u}, \theta)  \in \mathbb H$  is a weak solution  of \cref{eq:6.3a}, if it satisfies the variational equation 
\begin{equation} \label{WeakTransmission}
 \mathcal{A}_{\mathbb{R}^d\setminus \Gamma} \Big( ( \mathbf u, \theta), (\boldsymbol v, v) \Big)  = - 
 \mathcal{A}_{\mathbb{R}^d\setminus \Gamma} \Big( ( \boldsymbol 0, \widetilde{\varphi}),( \boldsymbol v, v) \Big) + \int_{\Gamma}(\boldsymbol \lambda, 0)\cdot (\boldsymbol v,v) ,
\end{equation}
for all  test functions $(\boldsymbol v, v) \in  \mathbb H$. The term in the right hand side involving the extension $\widetilde{\boldsymbol\phi}$ accounts for the non-homogeneous jump in the trace. It is clear that if this variational equaton holds, the distributional equation associated to  $\mathcal{B}_{\mathbb R^d\setminus\Gamma} \left(\cdot,\cdot\right)$ holds as well and vice versa. We will now show that $\mathcal A_{\mathbb R^d\setminus\Gamma}(\cdot,\cdot)$ is strongly elliptic. 

If we denote the $d$-dimensional identity operator by $\mathbf I$ and define 
\[
Z(s) := \left( \begin{array} {ll}
 s\,\mathbf{I} & 0 \\
  0 & \gamma/\eta
 \end{array} \right ),
\]
then a simple computation shows that
\begin{align}
\mathrm{Re} \; \mathcal{A}_{\mathbb{R}^d\setminus \Gamma} \left( \left( \mathbf{\overline{u}}, \theta\right) , Z(s) \left(\boldsymbol{u}, \overline{\theta} \right)^{\top}\right) &:=\mathrm{Re} \; 
 \int_{\mathbb{R}^d\setminus \Gamma} s \Big(\bm{\sigma}( \mathbf{ \overline{u}} ) : \bm{ \varepsilon }( \boldsymbol{ u}) + \rho \overline{s}^2 \mathbf{\overline{u}} \cdot \boldsymbol{u} - \gamma\,\overline{\theta}\, \nabla\,\cdot\boldsymbol{u} \Big) dx
 \nonumber \\
&  \quad \qquad + \mathrm{Re} \;  \int_{\mathbb{R}^d\setminus \Gamma}\Big(\,s \gamma  \nabla\cdot\mathbf{u}\, \overline{\theta} + \frac{\gamma}{\eta} ( \nabla \theta \cdot \nabla \overline{\theta}  + \frac{s}{\kappa}
 \theta\,  \overline{\theta} )  \Big)\,  dx  \nonumber\\
 &\ge  \min\{1,\gamma/\eta\}\,\frac{\sigma\underline{\sigma}}{|s|}\,\triple{( \mathbf u, \theta  )}^2_{|s|, \mathbb{R}^d\setminus \Gamma}  \label{eq:EllipticityND}
\end{align}
Therefore, there exists a solution of the variational equation \eqref{WeakTransmission} that we shall denote by $({\bf u}_{\lambda}, \widehat{\theta})$.  Moreover, for this pair it follows from \cref{WeakTransmission}  we see that
\begin{align*} 
 \min\{1,\gamma/\eta\}\,\frac{\sigma\underline{\sigma}}{|s|}\,\triple{( \mathbf u, \theta  )}^2_{|s|, \mathbb{R}^d\setminus \Gamma}  &\leq \mathrm{Re} \;  \mathcal{A}_{\mathbb{R}^d\setminus \Gamma} \Big((\overline{\mathbf u}_{\boldsymbol{\lambda}},\widehat{\theta}), Z(s)(\mathbf u_{\boldsymbol{\lambda}},  \overline{\widehat{\theta}} )^{\top}\Big)\\
 & = - \mathrm{Re} \;\mathcal{A}_{\mathbb{R}^d\setminus \Gamma} \Big( ( \boldsymbol 0, \widetilde{\varphi}), Z(s)(\mathbf u_{\boldsymbol{\lambda}},  \overline{\widehat{\theta}} )^{\top} \Big) 
+ \mathrm{Re} \;  \int_{\Gamma}(\overline{\boldsymbol \lambda}, 0) Z(s)(\mathbf u_{\boldsymbol{\lambda}},  \overline{\widehat{\theta}} )^{\top}  d_{\Gamma}.
\end{align*}
We begin the estimates of the first term on the right-hand side:  
\begin{align}
 - \mathrm{Re} \;\mathcal{A}_{\mathbb{R}^d\setminus \Gamma} \Big( ( \boldsymbol 0, \widetilde{\varphi}), Z(s)(\mathbf u_{\boldsymbol{\lambda}},  \overline{\widehat{\theta}} )^{\top} \Big) 
=\,& \mathrm{Re} \; \int_{\mathbb{R}^d \setminus \Gamma}
 \, s \gamma\, \overline{\widetilde{\varphi}}\,  \nabla \cdot \mathbf {{ u}_{\boldsymbol\lambda} } dx  \, - 
 \mathrm{Re} \; \int_{\mathbb{R}^d \setminus \Gamma} \frac{\gamma}{\eta} \left(  \nabla \widetilde{\varphi} \cdot \overline{\nabla\widehat{ \theta} } +  \frac{s}{\kappa |s|} |s| \widetilde{\varphi}\;  \overline{\widehat{\theta}} 
\, \right) dx \nonumber \\
& \leq | \, \mathrm{Re} \; \; \int_{\mathbb{R}^d \setminus \Gamma} \Big( s \gamma \nabla {\overline{\widetilde{\varphi}} } \cdot 
 \mathbf {{ u}_{\boldsymbol\lambda} } + \frac{\gamma}{\eta} (  \nabla \widetilde{\varphi} \cdot \overline{\nabla\widehat{ \theta} } +  \frac{s}{\kappa |s|} |s| \widetilde{\varphi}\;  \overline{\widehat{\theta}}  ) \Big) dx\,  |  \nonumber \\
 &\leq \gamma\;  max\{ 1, \frac{1}{\eta}\}\sqrt{\frac{|s|}{\underline{\sigma}}} \triple{ \widetilde{\varphi}}_{1, \mathbb{R}^d\setminus \Gamma}\; \triple{( \mathbf u_{\lambda}, \widehat{\theta}  )}_{|s|, \mathbb{R}^d\setminus \Gamma} \label{eq:BoundForA}
\end{align}
where  in the estimates,  an integration by parts  has been tacitly employed  to obtain 
$$ 
 \int_{\mathbb{R}^d \setminus \Gamma}
 \, s \gamma\, \overline{\widetilde{\varphi}}\,  \nabla \cdot \mathbf {{ u}_{\boldsymbol\lambda} } dx 
 = -  \int_{\mathbb{R}^d \setminus \Gamma}  s \gamma \nabla {\overline{\widetilde{\varphi}} } \cdot 
 \mathbf {{ u}_{\boldsymbol\lambda} } dx, 
 $$
 since $\jump{\widehat{\theta} } = {0}$.  Now for the second term,  we we see that 
 \begin{align*}
\left| \mathrm{Re} \;  \int_{\Gamma}(\boldsymbol \lambda, 0) Z(s)(\mathbf u_{\boldsymbol{\lambda}},  \overline{\widehat{\theta}} )^{\top}  d_{\Gamma} \right| &\leq  |s| \,  \|\boldsymbol{\lambda} \|_{ \mathbf H^{-1/2}(\Gamma) }\  \, 
\frac{1}{\underline{\sigma}}\;  \triple{( \mathbf u_{\lambda}, \widehat{\theta}  )}_{|s|, \mathbb{R}^d\setminus \Gamma}
 \end{align*} 
All the constants hidden by the symbol $\lesssim$ depend on the physical parameters $\kappa$, $\rho$, $\eta$, and $\gamma$ but not on the Laplace parameter $s$.  Consequently, combining the above estimate together with  \cref{eq:EllipticityND}, \cref{eq:BoundForA} we obtain 
the inequality
\begin{equation}\label{eq:BoundDNH1}
\triple{( \mathbf u_{\lambda}, \widehat{\theta}  )}_{1, \mathbb{R}^d\setminus \Gamma} \lesssim \frac{|s|^2}{\sigma \underline{\sigma}^3} \left(\triple{ \widetilde{\varphi}}_{1, \mathbb{R}^d\setminus \Gamma}
  + \|\boldsymbol{\lambda} \|_{ \mathbf H^{-1/2}(\Gamma) }\ \right),
\end{equation}
from which the uniqueness of the solution to \eqref{eq:6.3a} follows. If we then define $\theta_{\varphi} := \widehat{\theta} + \widetilde{\varphi}$, it follows that
\begin{align*}
\mathbf{B}(\partial_x , s) (\mathbf u_{\lambda}, {\theta}_{\varphi}) 
& = {\bf 0} \qquad \mbox{in}\quad \mathbb{R}^d \setminus \Gamma,\\
\jump{\mathcal{R}_{DN} ({\bf u}_{\lambda}, \theta_{\varphi}) } & = {\bf 0},\\
\jump{ \mathcal{R}_{ND} ({ \bf u}_{\lambda}, \theta_{\varphi})} & = (\boldsymbol{\lambda}, \varphi).
\end{align*}
 Thus, the pair $(\mathbf u_{\boldsymbol \lambda},\theta_{\varphi})$ is the unique solution of the transmission problem \cref{eq:6.3}.

The proof for the combined Neumann-Dirichlet transmission problem  \eqref{eq:6.9}, follows a very similar argument. For a given pair $(\boldsymbol \phi,\varsigma) \in \mathbf H^{1/2}(\Gamma)\times \mathrm H^{-1/2}(\Gamma)$, we extend $\bm{\phi}$  to  
$\bm{ \widetilde{\phi}}  \in  {\bf H}^1(\mathbb{R}^d\setminus \Gamma)$ with  $(\bm {\widetilde{\phi}}, 0) =  \mathcal{Q}_{DS} (s)(\bm{\phi}, 0)$. Having been defined in terms of layer potentials, the pair $(\widetilde{\boldsymbol\phi},0)$ will satisfy \cref{eq:6.9a} and $\jump{\mathbf T\,\widetilde{\boldsymbol\phi}}=\boldsymbol 0$. We will say that 
 $(\mathbf u , \theta)  \in \mathbb H$ is a weak solution of the transmission problem \cref{eq:6.9} if  it satisfies the variational equation
 \begin{equation}
 \mathcal{A}_{\mathbb{R}^d\setminus \Gamma} \Big((\mathbf u,\theta), (\mathbf v, v) \Big)  = - 
 \mathcal{A}_{\mathbb{R}^d\setminus \Gamma} \Big( (\widetilde{\boldsymbol\phi},0), (\mathbf  v, v) \Big) + \int_{\Gamma} (\boldsymbol 0, \varsigma) \cdot (\mathbf v,v) \label{eq:6.16}
\end{equation}
for all  test functions $({\bf v}, v) \in \mathbb H$. The solvability of the variational formulation follows from \cref{eq:EllipticityND}, therefore there exists at least one pair satisfying \cref{eq:6.9a}, which we shall denote $(\widehat{\mathbf u}, \theta_\varsigma)$,  
for which it follows that 
\begin{align*} 
 \min\{1,\gamma/\eta\}\,\frac{\sigma\underline{\sigma}}{|s|}\,\triple{( \widehat{\mathbf u}, \theta_{\varsigma}  )}^2_{|s|, \mathbb{R}^d\setminus \Gamma}  &\leq \mathrm{Re} \;  \mathcal{A}_{\mathbb{R}^d\setminus \Gamma} \Big( Z(\overline{s})(\widehat{\mathbf u} ,\overline{\theta_{\varsigma}})^{\top}, (\overline{ \widehat{\mathbf u}} , \theta_{\varsigma}) \Big). \\
 & = - \mathrm{Re} \;\mathcal{A}_{\mathbb{R}^d\setminus \Gamma} \Big( Z(\overline{s}) ( \widetilde{\boldsymbol \phi}, 0)^{\top}, (\overline{ \widehat{\mathbf u}}, \theta_{\varsigma} ) \Big) 
+ \mathrm{Re} \;  \int_{\Gamma} Z(\overline{s}) (0,\overline{{\varsigma}})^{\top}\cdot  ( \overline{\widehat{{\mathbf u}}},  \theta_{\varsigma} )  d_{\Gamma}.
\end{align*}
In the same manner, we obtain the estimates of the right hand side  of the above two terms such that 
\begin{subequations}\label{eq:UpperBoundND} 
\begin{align}
 - \mathrm{Re} \;\mathcal{A}_{\mathbb{R}^d\setminus \Gamma} \Big( Z(\overline{s}) ( \widetilde{\boldsymbol \phi}, 0)^{\top}, (\overline{ \widehat{\mathbf u}}, \theta_{\varsigma} ) \Big) 
 =\, & - \mathrm{Re} \int_{\mathbb{R}^d \setminus \Gamma}\!\! \Big( \bar{s} \boldsymbol\sigma ( \widetilde{\boldsymbol{\phi})} : \boldsymbol\varepsilon(\overline{\widehat{\mathbf u}})+ \rho\,  s\,  |s|^2 \widetilde{ \bm{\phi} }\cdot \overline{{\bf \widehat{u}}} + \bar{s}  \gamma \nabla\cdot \overline{\widetilde{\boldsymbol{\phi}}} \, \theta_{\varsigma} \Big) dx   \nonumber \\
 = \, & - \mathrm{Re} \int_{\mathbb{R}^d \setminus \Gamma}\!\! \Big( \bar{s} \boldsymbol\sigma ( \widetilde{\boldsymbol{\phi})} : \boldsymbol\varepsilon(\overline{\widehat{\mathbf u}})+ \rho\,  s\,  |s|^2 \widetilde{ \bm{\phi} }\cdot \overline{{\bf \widehat{u}}} -  \bar{s}  \gamma \nabla \theta_{\varsigma} \cdot \overline{\widetilde{\boldsymbol{\phi}}} \, \Big) dx  \nonumber \\
\leq \, & max\{1, \gamma\} \,  \frac{|s|^2} {\underline{\sigma}^2} \triple{\widetilde{\boldsymbol\phi}}_{1,\mathbb R^d\setminus\Gamma}\; \triple{ (\widehat{\mathbf u}, \theta_{\varsigma})}_{|s|,\mathbb R^d\setminus\Gamma}  
\label{eq:UpperBoundNDa} \\
\mathrm{Re} \;  \int_{\Gamma} Z(\overline{s}) (0,\overline{{\varsigma}})^{\top},  ( \overline{\widehat{{\mathbf u}}},  \theta_{\varsigma} )  d_{\Gamma} \nonumber  \\
\leq\, &  \frac{\gamma}{\eta} \,  \sqrt{\frac{1}{\underline\sigma}} \, \|\varsigma\|_{\mathrm H^{-1/2}(\Gamma)}\triple{\theta_\varsigma}_{|s|,\mathbb R^d\setminus\Gamma} \leq   \frac{\gamma}{\eta} \,  \sqrt{\frac{1}{\underline\sigma}} \, \|\varsigma\|_{\mathrm H^{-1/2}(\Gamma)}\triple{\theta_\varsigma}_{|s|,\mathbb R^d\setminus\Gamma} \label{eq:UpperBoundNDb}
  \end{align}
\end{subequations} 
Again in order  to bound the term $\nabla \cdot \widetilde{\bm{\phi}} $ we have applied the  integration by parts to the corresponding domain integral  in \cref{eq:UpperBoundNDa} and made use of  the jump condition $\jump{\theta_{\varsigma}} = 0 $. Just like before, the constants disregarded by the symbol $\lesssim$ have no dependence on the Laplace parameter $s$.  Combining \cref{eq:UpperBoundNDa} and \cref{eq:UpperBoundNDb} together with the estimate of  ellipticity condition 
$$
 \min\{1,\gamma/\eta\}\,\frac{\sigma\underline{\sigma}}{|s|}\,\triple{( \widehat{\mathbf u}, \theta_{\varsigma}  )}^2_{|s|, \mathbb{R}^d\setminus \Gamma}  \leq \mathrm{Re} \;  \mathcal{A}_{\mathbb{R}^d\setminus \Gamma} \Big( Z(\overline{s})(\widehat{\mathbf u} ,\overline{\theta_{\varsigma}})^{\top}, (\overline{ \widehat{\mathbf u}} , \theta_{\varsigma}) \Big)
$$
yield the inequality 
\begin{equation}\label{eq:BoundNDH1}
\triple{( {\bf \widehat{u}}, \theta_{\varsigma}  )}_{1, \mathbb{R}^d\setminus \Gamma} \lesssim \frac{|s|^3}{\sigma \, \underline\sigma^4}\left(\triple{\widetilde{\boldsymbol\phi}}_{1,\mathbb R^d\setminus\Gamma} + \|\varsigma\|_{\mathrm H^{-1/2}(\Gamma)}\right)
\end{equation}
from which the uniqueness of the solution follows. By defining ${\mathbf u}_{\boldsymbol\phi}:= \widetilde{\boldsymbol\phi} + \widehat{\mathbf u}$ it is clear that $\jump{\mathcal R_{DN}(\mathbf u_{\boldsymbol\phi},\theta_\varsigma)} = (\boldsymbol\phi,\varsigma)$ and  $\jump{\mathcal R_{ND}(\mathbf u_{\boldsymbol\phi},\theta_\varsigma)} = \boldsymbol 0$. Thus, $(\mathbf u_{\boldsymbol\phi},\theta_\varsigma)$ is the unique solution of problem \eqref{eq:6.9}.

% ========================================
\paragraph{\textbf{Boundary integral equations}}
% =======================================

Recall the boundary integral equations \cref{eq:6.5} and \cref{eq:6.11} for the thermoelastic problem
\begin{alignat*}{6}
\mathcal C_{SD} (s)\left(\boldsymbol{\lambda},\varphi\right) :=\,& \mathcal{R}_{DN}\,\mathcal{Q}_{SD}(s)\left(\boldsymbol{\lambda},\varphi\right) (x) =\,&& \left( \mathbf{f},g\right) \quad&\text{(Dirichlet-Neumann)},\\
\mathcal C_{DS}(s) \left(\boldsymbol{\phi},\varsigma\right):=\,& \mathcal{R}_{ND}\,\mathcal{Q}_{DS}(s) \left(\boldsymbol{\phi},\varsigma\right) (x) =\,&& \left( \mathbf{ g},f\right)\qquad& \text{(Neumann-Dirichlet)}. 
\end{alignat*}
Using the results from the previous section we can now show that the combined boundary integral operators $\mathcal C_{SD} (s)$ and $\mathcal C_{DS} (s)$ are in fact invertible. 

\begin{lemma}\label{lem:InverseBounds}
The combined boundary integral operators $\mathcal C_{SD}$ and $\mathcal C_{DS}$ are invertible. Moreover, the following bounds for their inverses hold
\begin{align}
\label{eq:BoundInverseCSD}
\|\mathcal C_{SD}^{-1}(s)\|_{\mathbf H^{1/2}(\Gamma)\times\mathrm H^{-1/2}(\Gamma)\rightarrow \mathbf H^{-1/2}(\Gamma)\times\mathrm H^{1/2}(\Gamma)}\lesssim \frac{|s|^4}{\sigma\underline{\sigma}^5} \\
\label{eq:BoundInverseCDS}
\|\mathcal C_{DS}^{-1}(s)\|_{\mathbf H^{-1/2}(\Gamma)\times\mathrm H^{1/2}(\Gamma)\rightarrow \mathbf H^{1/2}(\Gamma)\times\mathrm H^{-1/2}(\Gamma)} \lesssim \frac{|s|^4}{\sigma\underline{\sigma}^6}
\end{align}
\end{lemma}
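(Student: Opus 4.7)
The plan is to exploit the equivalence between the boundary integral equations \cref{eq:6.5}, \cref{eq:6.11} and the transmission problems of \cref{prop:CombinedTransmission} already established in the previous subsection. For injectivity of $\mathcal C_{SD}(s)$, I would suppose $\mathcal C_{SD}(s)(\boldsymbol\lambda,\varphi) = \mathbf 0$ and set $(\mathbf u,\theta) := \mathcal Q_{SD}(s)(\boldsymbol\lambda,\varphi)$. By \cref{prop:CombinedTransmission} one has $\jump{\mathcal R_{DN}(\mathbf u,\theta)} = \mathbf 0$, and the vanishing of the BIE forces $\{\mathcal R_{DN}(\mathbf u,\theta)\}^\mp = \mathbf 0$ as well, so both one-sided $\mathcal R_{DN}$-traces vanish. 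Testing the first Green formula \cref{eq:FirstGreenFormula} on each side of $\Gamma$ against the multiplier $Z(s)(\overline{\mathbf u},\overline\theta)^\top$, exactly as in the proof of \cref{prop:CombinedTransmission}, annihilates the boundary contributions, and the coercivity estimate \cref{eq:EllipticityND} applied separately to $\Omega^\mp$ forces $(\mathbf u,\theta)\equiv \mathbf 0$; consequently $(\boldsymbol\lambda,\varphi) = \jump{\mathcal R_{ND}(\mathbf u,\theta)} = \mathbf 0$. The injectivity of $\mathcal C_{DS}(s)$ follows from the same scheme with $\mathcal R_{DN}$ and $\mathcal R_{ND}$ exchanged.

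For surjectivity, given $(\mathbf f, g)\in \mathbf H^{1/2}(\Gamma)\times \mathrm H^{-1/2}(\Gamma)$ I would solve separately the interior and exterior combined Dirichlet--Neumann boundary value problems, i.e.\ find pairs $(\mathbf u^\mp, \theta^\mp)\in \mathbf H^1(\Omega^\mp)\times \mathrm H^1(\Omega^\mp)$ satisfying \cref{eq:ThermoElasticLaplaceSystem} in $\Omega^\mp$ with $\mathbf u^\mp|_\Gamma = \mathbf f$ and $\partial_n\theta^\mp = g$. These are well-posed via the coercivity \cref{eq:EllipticityND} restricted to the respective domain, using a standard continuous $\mathbf H^{1/2}$-lift $\mathbf u^\dagger$ to absorb the Dirichlet datum and treating the Neumann datum as a boundary source in the weak form. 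Gluing yields a pair $(\mathbf u,\theta)\in\mathbf H^1(\mathbb R^d\setminus\Gamma)\times\mathrm H^1(\mathbb R^d\setminus\Gamma)$ satisfying the PDE with $\mathcal R_{DN}(\mathbf u,\theta)^\mp = (\mathbf f, g)$, and in particular $\jump{\mathcal R_{DN}(\mathbf u,\theta)} = \mathbf 0$. Defining $(\boldsymbol\lambda,\varphi) := \jump{\mathcal R_{ND}(\mathbf u,\theta)}$, both $(\mathbf u,\theta)$ and $\mathcal Q_{SD}(s)(\boldsymbol\lambda,\varphi)$ solve the transmission problem \cref{eq:6.3} with this data pair, so by the uniqueness proved in the previous subsection they coincide, and therefore $\mathcal C_{SD}(s)(\boldsymbol\lambda,\varphi) = \mathcal R_{DN}(\mathbf u,\theta)^\mp = (\mathbf f, g)$. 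The construction for $\mathcal C_{DS}(s)$ is entirely parallel.

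The bounds \cref{eq:BoundInverseCSD} and \cref{eq:BoundInverseCDS} will be obtained by chaining three estimates. First, the energy bounds \cref{eq:BoundDNH1} and \cref{eq:BoundNDH1} (in the versions appropriate to the present data) control $\triple{(\mathbf u,\theta)}_{1,\mathbb R^d\setminus\Gamma}$ by $\triple{\mathbf u^\dagger}_{1,\mathbb R^d} + \|g\|_{\mathrm H^{-1/2}(\Gamma)}$ (resp.\ with the Neumann datum in place of the Dirichlet datum) up to the displayed $|s|$--$\sigma$--$\underline\sigma$ factor. Second, an $s$-explicit continuous trace lifting absorbs $\triple{\mathbf u^\dagger}_{1,\mathbb R^d}$ into $\|\mathbf f\|_{\mathbf H^{1/2}(\Gamma)}$. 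Third, the standard trace estimates for $\mathbf H^1_{\Delta^*}(\Omega^\mp)$ and $\mathrm H^1_\Delta(\Omega^\mp)$, together with the PDE identities $\Delta^*\mathbf u = \rho s^2\mathbf u + \gamma\nabla\theta$ and $\Delta\theta = (s/\kappa)\theta + s\eta\nabla\cdot\mathbf u$, convert the ND (resp.\ DN) jump of $(\mathbf u,\theta)$ into an $\mathbf H^{-1/2}(\Gamma)\times\mathrm H^{1/2}(\Gamma)$ (resp.\ $\mathbf H^{1/2}(\Gamma)\times\mathrm H^{-1/2}(\Gamma)$) norm controlled by $\triple{(\mathbf u,\theta)}_{1,\mathbb R^d\setminus\Gamma}$ times additional factors of $|s|$ and $\underline\sigma^{-1}$. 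The chief obstacle throughout is the careful $s$-dependent bookkeeping: each application of the norm equivalences \cref{eq:4.6}, each lifting operator, and each substitution via the PDE introduces controlled factors of $|s|$ and $\underline\sigma$ that must be tallied judiciously to produce exactly the exponents $4$, $5$, and $6$ advertised in the statement.
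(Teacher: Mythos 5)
Your qualitative argument for invertibility is workable and genuinely different from the paper's: you argue injectivity by noting that a vanishing right-hand side forces both one-sided $\mathcal R_{DN}$-traces of the potential to vanish and then apply the domain coercivity on each side, and surjectivity by solving the interior and exterior combined boundary value problems, gluing, and invoking the uniqueness of the transmission problem to identify the glued solution with a $\mathcal Q_{SD}$ (resp.\ $\mathcal Q_{DS}$) potential. The paper never solves the BVPs; instead it tests the layer-potential ansatz in the first Green identity, uses the jump relations to rewrite the boundary term as $\mathrm{Re}\,\langle(\overline{\boldsymbol\lambda,\varphi}),Z(s)\,\mathcal C_{SD}(s)(\boldsymbol\lambda,\varphi)\rangle_\Gamma$, and combines \cref{eq:EllipticityND} with the density estimate \cite[estimate 4.12]{HsSa:2020}, the norm equivalences \cref{eq:4.6} and the trace theorem to obtain the coercivity inequalities \cref{eq:CSDLoweBound} and \cref{eq:CDSCoercivityBound}; a Lax--Milgram argument (with the extra factor $\|Z(s)\|\lesssim |s|/\underline\sigma$) then yields invertibility \emph{and} the bounds \cref{eq:BoundInverseCSD}--\cref{eq:BoundInverseCDS} in one stroke.

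The genuine gap is in the quantitative half, which is the actual content of \cref{lem:InverseBounds}. Your three-step chain is only a plan. First, the estimates \cref{eq:BoundDNH1} and \cref{eq:BoundNDH1} that you propose to use ``in the versions appropriate to the present data'' do not exist in that form: as stated they bound the transmission solution in terms of the jump densities $(\boldsymbol\lambda,\varphi)$, resp.\ $(\widetilde{\boldsymbol\phi},\varsigma)$ --- precisely the quantities you are trying to control --- and not in terms of the boundary data $(\mathbf f,g)$ or $(\mathbf g,f)$; you would need to re-derive $s$-explicit stability estimates for the one-sided Dirichlet--Neumann and Neumann--Dirichlet problems, including the interaction of the Dirichlet lifting with the $\rho s^2$ and coupling terms, which is a substantive computation comparable to the paper's own argument. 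Second, the conversion of the $\mathcal R_{ND}$ (resp.\ $\mathcal R_{DN}$) jump of the glued solution into $\mathbf H^{-1/2}(\Gamma)\times\mathrm H^{1/2}(\Gamma)$ (resp.\ $\mathbf H^{1/2}(\Gamma)\times\mathrm H^{-1/2}(\Gamma)$) norms via the PDE substitutions introduces additional powers of $|s|$ and $\underline\sigma^{-1}$, and you never tally the accumulated exponents to check that they reproduce (or improve on) $|s|^4/(\sigma\underline\sigma^5)$ and $|s|^4/(\sigma\underline\sigma^6)$; you explicitly defer this bookkeeping, yet these exponents \emph{are} the statement being proved and they determine the regularity counts in the time-domain theorems. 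Until the lifting estimate, the one-sided stability constants, and the trace/PDE constants are written out and combined, the displayed bounds are not established.
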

\begin{proof}
We begin with the weak formulation of partial differential equations \cref{eq:6.3a} or \cref{eq:6.9a}: 
\begin{align}
 \mathcal{A}_{\mathbb{R}^d\setminus \Gamma}
\left((\mathbf {u}, \; \theta), 
({\mathbf v}, v)\right) & = \int_{\Gamma}  \mathcal{R}_N 
( {\mathbf u}, \; \theta), \; ( \mathbf {v}, \; v)\; 
d\Gamma, \label{eq:6.12}
\end{align}
where $(\mathbf{ u}, \theta ) = ( {\mathbf{u}}_{\boldsymbol\lambda},\theta_\varphi )  $ or  $(\mathbf{ u}, \theta ) = ( {\mathbf{u}}_{\boldsymbol\phi},\theta_\varsigma ) $.
For the Dirichlet-Neumann problem, consider an arbitrary pair $(\boldsymbol\lambda,\varphi)\in\mathbf H^{-1/2}(\Gamma)\times\mathrm H^{1/2}(\Gamma)$ and the layer potential ansatz $(\mathbf u_{\boldsymbol\lambda},\theta_\varphi)= \mathcal Q_{SD}(\boldsymbol\lambda,\varphi)$. By the continuity properties of the double and simple layer potentials we have that 
\[
\left(\mathbf{u}_{\boldsymbol\lambda},\partial_n\theta_\varphi\right)^-=\left(\mathbf{u}_{\boldsymbol\lambda},\partial_n\theta_\varphi\right)^+ \quad \text{ and } \quad \jump{\mathcal R_{ND}(\mathbf u_{\boldsymbol\lambda},\theta_\varphi)} = (\boldsymbol\lambda,\varphi).
\]
Hence, if we use $(\overline{\mathbf {u}}_{\boldsymbol\lambda},\theta_\varphi)$ as test and $(\mathbf u_{\boldsymbol\lambda},\overline{\theta}_\varphi)$ as trial in the boundary term from \cref{eq:6.12}, we observe that 
\begin{align*}
\int_{\Gamma}\!&\mathcal{R}_N\left(\overline{\mathbf{u}}_{\boldsymbol\lambda},\theta_\varphi\right)\,\cdot \left(\mathbf{u}_{\boldsymbol\lambda},\overline{\theta}_\varphi  \right) d_{\Gamma} \\
=\,& \int_\Gamma\!\Big(\!\left(\,  \overline{\mathbf T\,\mathbf u_{\boldsymbol\lambda} - \gamma\theta_\varphi\mathbf n },\; \partial_n\theta_\varphi\right)^- \cdot\left(\mathbf{u}_{\boldsymbol\lambda},\overline{\theta_\varphi}\right)^- - \left(\overline{ \mathbf T\,\mathbf u_{\boldsymbol\lambda} - \gamma\theta_\varphi\mathbf n }, \;  \partial_n\theta_\varphi\right)^+\cdot \left(\mathbf{u}_{\boldsymbol\lambda}, \overline{\theta_\varphi}\right)^+ \!\Big)\;  d_{\Gamma} \\
=\,& \int_\Gamma\!\Big(\!\left(\,  \overline{\mathbf T\,\mathbf u_{\boldsymbol\lambda} - \gamma\theta_\varphi\mathbf n },\; \overline{\theta_{\varphi}}\right)^- \cdot\left(\mathbf{u}_{\boldsymbol\lambda}, \partial_n\theta_\varphi \right)^- - \left(\overline{ \mathbf T\,\mathbf u_{\boldsymbol\lambda} - \gamma\theta_\varphi\mathbf n }, \;  ,\overline{\theta_\varphi} \right)^+\cdot \left(\mathbf{u}_{\boldsymbol\lambda}, \partial_n\theta_\varphi \right)^+ \!\Big) \; d_{\Gamma} \\
=\,& \int_\Gamma\!\jump{\left( \overline{\mathbf T\,\mathbf u_{\boldsymbol\lambda} - \gamma\theta_\varphi\mathbf n},\overline{\theta_\varphi}\right)}\cdot\left(\mathbf{u}_{\boldsymbol\lambda},\partial_n\theta_\varphi\right)  d_{\Gamma}
=\,\int_\Gamma\!\jump{\mathcal R_{ND}(\overline{\mathbf u_{\boldsymbol\lambda}, -\theta_\varphi})}\cdot\mathcal R_{DN}(\mathbf u_{\boldsymbol\lambda},\theta_\varphi)  d_{\Gamma}\\
=\,&\int_\Gamma\!( \overline{\boldsymbol\lambda,\; \varphi})\cdot\mathcal R_{DN}\mathcal Q_{SD}(\boldsymbol\lambda,\varphi)
d_{\Gamma} 
= \,\int_\Gamma\!( \overline{\boldsymbol\lambda,\; \varphi})\cdot\mathcal C_{SD}(\boldsymbol\lambda,\; \varphi). d_{\Gamma}
\end{align*}
Above, it is important to recall that the second entry of the combined boundary operator $\mathcal R_{ND}$ is defined in \cref{eq:RND} with the negative sign, so that
\[
\jump{\left(\overline{\mathbf T\,\mathbf u_{\boldsymbol\lambda} - \gamma\theta_\varphi\mathbf n,\; - \theta_\varphi}\right)} = \jump{\mathcal R_{ND}(\overline{\mathbf u_{\boldsymbol\lambda}, \; \theta_\varphi})} = (\overline{\boldsymbol\lambda,\; \varphi}).
\]
Therefore, going back to \cref{eq:6.12} and recalling the estimate \cref{eq:EllipticityND} it follows that
\begin{align}
\nonumber
\frac{\sigma\underline{\sigma}}{|s|}\,\triple{( \mathbf u_{\boldsymbol\lambda}, \theta_\varphi)}^2_{|s|, \mathbb{R}^d\setminus \Gamma}  \lesssim\,& \mathrm{Re} \;  \mathcal{A}_{\mathbb{R}^d\setminus \Gamma} \Big( \overline{\mathbf (\mathbf u_{\boldsymbol\lambda} },\theta_\varphi),Z(s)(\mathbf u_{\boldsymbol\lambda}, \overline{\theta_\varphi })^\top\Big) \\
\nonumber
=\,& \mathrm{Re}\langle\jump{\mathcal{R}_{ND}\left( \overline{\mathbf{u}_{\boldsymbol\lambda},\theta_\varphi }\right)},Z(s)\left(\mathbf{u}_{\boldsymbol\lambda}, \theta_\varphi \right)^\top\rangle_\Gamma\\
\label{eq:CSDLoweBound}
=\, & \mathrm{Re}\Big\langle (\overline{ \boldsymbol\lambda,\; \varphi} ),  Z(s) \mathcal C_{SD} (\boldsymbol\lambda,\varphi) \Big\rangle_\Gamma, 
\end{align}
which is a fundamental inequality.

For the final step,  it remains to  bound $ \triple{( \mathbf u_{\boldsymbol\lambda}, \theta_\varphi)}^2_{|s|, \mathbb{R}^d\setminus \Gamma}$ below  by $ \|\boldsymbol{\lambda} \|^2_{ \mathbf H^{-1/2} (\Gamma)}$ and $  \| \varphi) \|^2_{ H^{1/2} (\Gamma) }$. We will make use of \cite[estimate 4.12]{HsSa:2020} relating the $\mathrm H^{-1/2}(\Gamma)$-norms of the densities $(\boldsymbol\lambda,\varphi)$ to the energy norm of the functions $(\mathbf u_{\boldsymbol\lambda},\theta_\varsigma) = \mathcal S(s)(\boldsymbol\lambda,\varphi)$, namely
\[
 \|( \boldsymbol{\lambda}, \varsigma) \|^2_{ {\bf H}^{-1/2} (\Gamma) } \lesssim \frac{|s|^2} {{\underline{\sigma}}^3} 
 \triple{( \mathbf u_{\boldsymbol\lambda}, \theta_{\varsigma} )}^2_{|s|, \mathbb{R}^d\setminus \Gamma}.
\]
This inequality implies
\begin{align}
\label{eq:boundLambda}
\|\boldsymbol\lambda\|^2_{\mathbf H^{-1/2}(\Gamma)} =\,& \|(\boldsymbol\lambda,0)\|^2_{\mathbf H^{-1/2}(\Gamma)} \lesssim
 \frac{|s|^2} {{\underline{\sigma}}^3} 
 \triple{( \mathbf u_{\boldsymbol\lambda}, 0 )}^2_{|s|, \mathbb{R}^d\setminus \Gamma} \leq  \frac{|s|^2} {{\underline{\sigma}}^3} 
 \triple{( \mathbf u_{\boldsymbol\lambda}, \theta_{\varphi} )}^2_{|s|, \mathbb{R}^d\setminus \Gamma}, \\
\intertext{and}
\label{eq:boundSigma}
\|\varsigma\|^2_{\mathrm H^{-1/2}(\Gamma)} =\,&\|(\boldsymbol 0,\varsigma)\|^2_{\mathrm H^{-1/2}(\Gamma)}\lesssim
 \frac{|s|^2} {{\underline{\sigma}}^3} 
 \triple{( \boldsymbol 0, \theta_\varsigma)}^2_{|s|, \mathbb{R}^d\setminus \Gamma} \leq  \frac{|s|^2} {{\underline{\sigma}}^3} 
 \triple{( \mathbf u_{\boldsymbol\phi}, \theta_{\varsigma} )}^2_{|s|, \mathbb{R}^d\setminus \Gamma}.
\end{align}
From \cref{eq:boundLambda} combined with \cref{eq:4.6} and the trace theorem it follows that
\begin{align*}
2\frac{|s|^2} {{\underline{\sigma}}^3} 
 \triple{( \mathbf u_{\boldsymbol\lambda}, \theta_{\varphi} )}^2_{|s|, \mathbb{R}^d\setminus \Gamma}\geq\,& \|\boldsymbol\lambda\|^2_{\mathbf H^{-1/2}(\Gamma)} + \frac{|s|^2} {{\underline{\sigma}}^2} 
 \triple{( \mathbf u_{\boldsymbol\lambda}, \theta_{\varphi} )}^2_{1, \mathbb{R}^d\setminus \Gamma} \\
 \geq\,&  \|\boldsymbol\lambda\|^2_{\mathbf H^{-1/2}(\Gamma)} + \frac{|s|^2} {{\underline{\sigma}}^2}  \|\varphi\|^2_{\mathrm H^{1/2}(\Gamma)} \\
 \geq\,& \|\boldsymbol\lambda\|^2_{\mathbf H^{-1/2}(\Gamma)} + \max\{1,|s|^2\}\|\varphi\|^2_{\mathrm H^{1/2}(\Gamma)}\\
 \label{eq:boundarybound}
 \geq\,&\|\boldsymbol\lambda\|^2_{\mathbf H^{-1/2}(\Gamma)} + \|\varphi\|^2_{\mathrm H^{1/2}(\Gamma)}.
\end{align*}
Combining this with \cref{eq:CSDLoweBound} yields
\[
\frac{\sigma\underline{\sigma}^4}{|s|^3}\, \|( \boldsymbol{\lambda}, \varphi) \|^2_{ \mathbf H^{-1/2} (\Gamma)\times\mathrm H^{1/2} (\Gamma) }\lesssim  \mathrm{Re}\Big\langle (\overline{ \boldsymbol\lambda,\; \varphi} ),  Z(s) \mathcal C_{SD} (\boldsymbol\lambda,\varphi) \Big\rangle_\Gamma 
\]
which proves the ellipticity of $\mathcal C_{SD}$ and the bound \cref{eq:BoundInverseCSD} by a Lax-Milgram argument.

A very similar argument can be used for the Neumann-Dirichlet problem, we will give here only the main steps and will skip the details. By taking a layer potential ansatz of the form $(\mathbf u_{\boldsymbol\phi},\theta_\varsigma)= \mathcal Q_{DS}(\boldsymbol\phi,\varsigma)$ for arbitrary densities $(\boldsymbol\phi,\varsigma)\in\mathbf H^{1/2}(\Gamma)\times\mathrm H^{-1/2}(\Gamma)$ and using the continuity properties of the simple and double layer potentials one arrives at
\begin{align*}
\int_{\Gamma}\;   \mathcal{R}_N \left(\mathbf{u}_{\boldsymbol\phi}, \overline{\theta_\varsigma}  \right)  \cdot \left( \overline{\mathbf{u}_{\boldsymbol\phi}} ,\theta_\varsigma \right)  d_{\Gamma} \, & =  \int_{\Gamma}\;  \;   \mathcal{R}_{ND} \mathcal Q_{DS} (\boldsymbol\phi, \varsigma )\; \cdot \jump{\overline{ \mathcal{R}_{DN} ( \mathbf{u}_{\boldsymbol\phi}, \theta_{\varsigma} ) }}  d_{\Gamma} \\
\,& =  \Big\langle\;  \mathcal C_{DS}  (\boldsymbol\phi, \varsigma ),  ( \overline{\boldsymbol\phi, \varsigma } )  \Big \rangle_{\Gamma}
\end{align*}
From here, recalling \cref{eq:6.12} and based on the ellipticity of 
$ \mathrm{Re} \;  \mathcal{A}_{\mathbb{R}^d\setminus \Gamma}( Z(\overline{s} ) (\mathbf u_{\boldsymbol\phi}, \overline{\theta_{\varsigma} } )^{\top}, (\overline{{\mathbf u}}_{\boldsymbol\phi} , \theta_{\varsigma}) )$ we arrive at a similar fundamental equality for the boundary integral operator
 $ \mathcal C_{DS} $:
\begin{equation} \label{eq:CDSCoercivityBound}
\frac{\sigma\underline{\sigma}}{|s|}\,\triple{( \mathbf u_{\boldsymbol\phi}, \theta_\varsigma)}^2_{|s|, \mathbb{R}^d\setminus \Gamma}  \lesssim \mathrm{Re} \langle  Z(\bar{s}) \mathcal C_{DS} (\boldsymbol\phi,\varsigma),\overline{(\boldsymbol\phi,\varsigma)}\rangle_\Gamma.
\end{equation}
In the same manner, from  \cref{eq:boundSigma} combined with  \cref{eq:4.6} and the trace theorem, we obtain 
\begin{align*}
2\frac{|s|^2} {{\underline{\sigma}}^3} 
 \triple{( \mathbf u_{\boldsymbol\phi}, \theta_{\varsigma} )}^2_{|s|, \mathbb{R}^d\setminus \Gamma} 
\geq \, & \underline{\sigma} \; \max\{1,|s|^2\} \; \|\boldsymbol\phi\|^2_{\mathbf H^{1/2}(\Gamma)} + \|\varsigma\|^2_{\mathrm H^{-1/2}(\Gamma)} \\
\geq \, &\underline{\sigma} \left( \|\boldsymbol\phi\|^2_{\mathbf H^{1/2}(\Gamma)} + \|\varsigma\|^2_{\mathrm H^{-1/2}(\Gamma)}\right).
\nonumber 
 \end{align*}
This in turn can be bounded from below in terms of the densities by a combined application of \cref{eq:CDSCoercivityBound} and the above estimate,  leading to
\[
\frac{\sigma\underline{\sigma}^5}{|s|^3}\, \|( \boldsymbol{\phi}, \varsigma) \|^2_{ \mathbf H^{1/2} (\Gamma)\times\mathrm H^{-1/2} (\Gamma) }\lesssim \mathrm{Re}\langle Z(\overline{s})\mathcal C_{DS} (\boldsymbol\phi,\varsigma),\overline{(\boldsymbol\phi,\varsigma)}\rangle_\Gamma.
\]
This establishes the coercivity of $\mathcal C_{DS}$. Finally, an application of the Lax-Milgram lemma leads to \cref{eq:BoundInverseCDS}.
\end{proof}

% ==================================================== 
\section{Results in the time domain}\label{sec:TimeDomain}
% ===================================================
%
%
% =================================================================
\subsection{Time-domain convolutional boundary integral equations} \label{sec:ConvolutionalEquations}
% =================================================================
%
For convenience, we recall the time-domain combined initial boundary value problems for the elastic displacement field ${\bf U}(x,t)$ and temperature filed ${\Theta}(x,t)$ governed by the linear thermo-elasto-dynamic system \cref{eq:GoverningEquations} 
\begin{alignat*}{6}
\rho \frac{\partial^2\mathbf{U}} {\partial t^2} - \Delta^{*} \mathbf{U} + \gamma \, \nabla~\Theta 
 =\,& \mathbf{0} \qquad&& \text{ in } \Omega^\mp\times(0,T), \\ 
 \frac{1}{\kappa}\frac{\partial \Theta}{\partial t} - \Delta \Theta + \eta\; \frac{\partial}{\partial t}(\nabla\cdot \mathbf{U}) =\,& 0 \qquad&& \text{ in } \Omega^\mp\times(0,T), \\
 {\bf U}(x,t) = {\bf 0}, \quad \frac{\partial}{\partial t}{\bf U}(x,t )= {\bf 0}, \quad {\Theta}(x,t) =\,& 0, \quad&& \text{ in } \Omega^\mp\times(-\infty,0),
 \end{alignat*}
together with either Dirichlet-Neumann boundary conditions:
\[
{\bf U}(x,t) =\, {\bf F}(x,t),\quad \mbox{and} \quad \nabla\Theta(x,t)\cdot\mathbf n =\, G(x,t) \qquad \mbox{ on } \quad \Gamma_T,
\]
or Neumann-Dirichlet boundary conditions:
\[
\boldsymbol{\sigma} ( {\bf U}, \Theta ) {\bf n} = {\bf G}(x,t) \quad\mbox{ and }\quad \Theta  =\, F(x,t) \qquad \mbox{on}\quad \Gamma_T.
\]
Let us begin the time-domain analysis with the Dirichlet-Neumann boundary value problem. In the previous section it was shown that for given 
$(\mathbf F (x,\cdot), G(x,\cdot) ) \in \mathbf H^{1/2}(\Gamma) \times \mathrm H^{-1/2}(\Gamma)$, the Laplace-transformed problem has a unique weak  solution $\mathbf H^1( \Omega^\mp) \times \mathrm H^1(\Omega^\mp) \ni (\mathbf{u}_{\bm \lambda}, \theta_{\varphi}) := \mathcal L\{(\mathbf U, \Theta)\}$ in the form 
\[
(\mathbf{u}_{\bm \lambda}, \theta_{\varphi}) =  \mathcal{Q}_{SD} (s)(\boldsymbol{\lambda}, 
 \varphi) \in {\bf H}^1(\Omega^\mp) \times \mathrm H^1(\Omega^\mp),
 \]
 where $(\boldsymbol{\lambda}, \varphi) \in \mathbf H^{1/2}(\Gamma) \times \mathrm H^{-1/2}(\Gamma)$
 is the unique solution of the boundary integral equation of the first kind
 \[
 \mathcal {C}_{SD}(s) ( \boldsymbol{\lambda}, \varphi ) = (\mathbf f(x,s), g(x,s) ) \quad \mbox{on}\quad \Gamma. 
\]
The solutions to the PDE sytem \cref{eq:2.1} and \cref{eq:2.2} are obtained in terms of the boundary data and the layer densities by the two-step process 
\begin{align*}
( \boldsymbol{\lambda}, \varphi ) =\,& \mathcal{C}_{SD}^{-1}(s)( \mathbf f(x,s), g(x,s) ) , \\
(\mathbf{u}_{\bm \lambda}, \theta_{\varphi})  =\,&  \mathcal{Q}_{SD}(s) ( \boldsymbol{\lambda}, \varphi ) = \mathcal{Q}_{SD}(s)\mathcal{C}_{SD}^{-1}(s) (\mathbf f(x,s), g(x,s) ).
\end{align*} 
The time-domain counterparts to these identities are given in terms of the convolutions
\begin{align*}
\mathcal{L}^{-1}\{ ( \boldsymbol{\lambda}, \varphi )\} & = \mathcal{L}^{-1}\{\mathcal{C}_{SD}^{-1}(s)\} \ast 
(\mathbf F(x,t), G(x,t)), \\
 (\mathbf U(x,t), \Theta(x,t) ) &= \mathcal{L}^{-1} \{  \mathcal{Q}_{SD}(s) \circ\mathcal{C}_{SD}^{-1}(s) \} \ast 
(\mathbf F(x,t), G(x,t)).
\end{align*}
Similarly, for the Neumann-Dirichlet problem, the weak solution in the Laplace domain is given in terms of the combined layer potential in the form:
 \[
(\mathbf{u}_{\bm \phi}, \theta_{\varsigma}) =  \mathcal{Q}_{DS} (s)(\boldsymbol{\phi}, 
 \varsigma) \in \mathbf H^1(\Omega^\mp) \times \mathrm H^1(\Omega^\mp).
 \]
As we have shown previously, the densities $( \boldsymbol{\phi}, \varsigma) \in \mathbf H^{1/2}(\Gamma) \times \mathrm H^{-1/2}(\Gamma)$ are the unique solution of the boundary integral equation of the first kind
\[
 \mathcal{C}_{DS}(s) ( \boldsymbol{\phi}, \varsigma ) = (\mathbf g(x,s), f(x,s) ) \quad \mbox{on}\quad \Gamma. 
\]
In the same manner as for the Dirichlet-Neumann problem, we can express the solutions $( \boldsymbol{\phi}, \varsigma )$
 and $(\mathbf{u}_{\bm \phi}, \theta_{\varsigma}) $ in terms of the given boundary data as
\begin{align*}
( \boldsymbol{\phi}, \varsigma ) =\,& \mathcal{C}_{DS}^{-1}(s)( \mathbf g(x,s), f(x,s) ) , \\
(\mathbf{u}_{\bm \phi}, \theta_{\varsigma})  =\,&  \mathcal{Q}_{DS}(s) ( \boldsymbol{\phi}, \varsigma ) = \mathcal{Q}_{DS}(s)\mathcal{C}_{DS}^{-1}(s) (\mathbf g(x,s), f(x,s) ),
\end{align*}
which in the time domain become
\begin{align*}
\mathcal{L}^{-1} \{( \boldsymbol{\phi}, \varsigma )\} & = \mathcal{L}^{-1}\{\mathcal{C}_{DS}^{-1}(s)\} \ast 
(\mathbf{F}(x,t), G(x,t)) , \\
 (\mathbf {U}(x,t), \Theta(x,t) ) &= \mathcal{L}^{-1} \{  \mathcal{Q}_{DS}(s) \circ\mathcal{C}_{DS}^{-1}(s)\} \ast 
(\mathbf{F}(x,t), G(x,t)).  
\end{align*}
%
% =====================================================================
\subsection{A class of admissible symbols}\label{sec:AdmissibleSymbols}
% =====================================================================
Having established the coercivity of the combined operators $\mathcal C_{SD}$ and $\mathcal C_{DS}$ in the Laplace domain in \cref{sec:ExistenceAndUniqueness}, we are now in a position to establish the solvability of the time-domain counterpart. In order to state the result that will allow us to transfer our previous analysis in the Laplace domain back in to the time domain via Lubich's method \cite{Lu:1994}, we will first have to define an admissible class of symbols. 

For Banach spaces $X$ and $Y$, we will denote the set of bounded linear operators from $X$ to $Y$ as $\mathcal{B}(X, Y)$ and will say that an analytic function $A : \mathbb{C}_+ \rightarrow \mathcal{B}(X, Y)$ belongs to the class $ \mathcal{A} (\mu, \mathcal{B}(X, Y))$, if there exists  $\mu \in \mathbb{R}$ such that 
\[
\|A(s)\|_{X,Y} \le C_A\left(\mathrm{Re} (s)\right) |s|^{\mu} \quad \mbox{for}\quad s \in \mathbb{C}_+ ,
\]
where $C_A : (0, \infty) \rightarrow (0, \infty) $ is a non-increasing function such that 
\[
C_A(\sigma) \le \frac{ c}{\sigma^m} , \quad \forall \quad \sigma \in ( 0, 1]
\]
for some $m \ge 0$ and constant $c>0$. The reader will notice the resemblance between the admissibility criterion above and the bounds established in \cref{lem:InverseBounds} for the inverses of the operators $\mathcal C_{SD}$ and $\mathcal C_{DS}$. The significance of these bounds will be made clear by the following theorem, that will establish the connection between the Laplace domain operators studied in the previous section, and the solution of the time domain problem under consideration.

\begin{theorem}[See \cite{Sayas:2016}, Proposition 3.2.2 and \cite{Sayas:2016errata}]\label{pr:5.1}
Let $A = \mathcal{L}\{a\} \in \mathcal{A} (k + \alpha, \mathcal{B}(X, Y))$ with $\alpha\in [0, 1)$ and $k$ a non-negative integer.  If $ g \in \mathcal{C}^{k+1}(\mathbb{R}, X)$ is causal and its derivative $g^{(k+2)}$ is integrable, then $a* g \in \mathcal{C}(\mathbb{R}, Y)$ is causal and 
\[
\| (a*g)(t) \|_Y \le 2^{\alpha} C_{\epsilon} (t) C_A (t^{-1}) \int_0^1 \|(\mathcal{P}_2g^{(k)})(\tau) \|_X \; d\tau,
\]
where 
\[C_{\epsilon} (t) := \frac{1}{2\sqrt{\pi}} \frac{\Gamma(\epsilon/2)}{\Gamma\left( (\epsilon+1)/2 \right) } \frac{t^{\epsilon}}{(1+ t)^{\epsilon}}, \qquad (\epsilon :=  1- \alpha \; \;  \mbox{and}\; \;  \mu = k +\alpha)
\]
and 
\[
(\mathcal{P}_2g) (t) =  g + 2\dot{g} + \ddot{g}.
\]
\end{theorem}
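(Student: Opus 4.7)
The proof strategy is a two-stage reduction of the symbol $A(s)$ to a class with growth strictly less than $-1$ (so that its inverse Laplace transform is absolutely convergent along vertical contours in $\mathbb C_+$), followed by a Bromwich estimate whose contour abscissa is tuned to $t$. In the first stage, causality of $g$ combined with $g\in \mathcal C^{k+1}(\mathbb R,X)$ forces $g(0)=\dot g(0)=\cdots=g^{(k-1)}(0)=0$, so $\mathcal L\{g^{(k)}\}(s)=s^k\mathcal L\{g\}(s)$; setting $B(s):=s^{-k}A(s)$ gives $B\in\mathcal A(\alpha,\mathcal B(X,Y))$ (since $|s|^{-k}\cdot|s|^{k+\alpha}=|s|^\alpha$, with the same $C_A$), and $(a\ast g)(t)=(b\ast g^{(k)})(t)$. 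In the second stage, since $\mathcal L\{te^{-t}\}(s)=(1+s)^{-2}$ and $g^{(k)}$ is causal, $\mathcal C^1$, with $g^{(k+2)}\in L^1$, one has $\mathcal L\{g^{(k)}\}=(1+s)^{-2}\mathcal L\{\mathcal P_2 g^{(k)}\}$, so the composite symbol $C(s):=B(s)(1+s)^{-2}$ lies in $\mathcal A(\alpha-2,\mathcal B(X,Y))$ with $\alpha-2<-1$.

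With $C$ absolutely integrable on every vertical line $\mathrm{Re}(s)=\sigma_0>0$, its inverse Laplace transform admits the Bromwich representation
\[
c(t)=\frac{1}{2\pi i}\int_{\sigma_0-i\infty}^{\sigma_0+i\infty}e^{st}\,C(s)\,ds,\qquad t>0,
\]
so that $(a\ast g)(t)=(c\ast \mathcal P_2 g^{(k)})(t)$. The core estimate bounds $\|C(\sigma_0+i\xi)\|_{X,Y}\le C_A(\sigma_0)(\sigma_0^2+\xi^2)^{\alpha/2}\bigl((1+\sigma_0)^2+\xi^2\bigr)^{-1}$; after a change of variable $\xi=\sigma_0\tan\vartheta$ (or, equivalently, after splitting the range $|\xi|\le 1$ vs. $|\xi|>1$) the $\xi$-integral collapses to a Beta function, producing the ratio $\Gamma(\epsilon/2)/\Gamma((\epsilon+1)/2)$ with $\epsilon=1-\alpha$ via the identity $B(\epsilon/2,1/2)=\sqrt\pi\,\Gamma(\epsilon/2)/\Gamma((\epsilon+1)/2)$. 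The choice $\sigma_0=\min\{1,1/t\}$ then balances $e^{\sigma_0 t}\le e$ against $\sigma_0^{\alpha-1}$ and yields $\|c(t)\|_{X,Y}\lesssim C_A(t^{-1})\,t^\epsilon(1+t)^{-\epsilon}$ with the precise universal constant $\tfrac{2^\alpha}{2\sqrt\pi}\Gamma(\epsilon/2)/\Gamma((\epsilon+1)/2)$. Young's convolution inequality applied to $c\ast\mathcal P_2 g^{(k)}$ then delivers the stated bound.

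The delicate point is tracking the constants through both reductions so as to recover exactly the prefactor $C_\epsilon(t)$: the $(1+t)^{-\epsilon}$ tail requires analyzing the regimes $t\le 1$ and $t\ge 1$ separately, because the optimal contour abscissa jumps from $1/t$ to $1$, and the sharp factor $2^{\alpha}$ arises from an inequality of the form $|1+s|^{2}\ge\tfrac12\bigl((1+\sigma_0)^{2}+\xi^{2}\bigr)$ used to isolate a pure Beta integral. Continuity and causality of $a\ast g$ are inherited from $c$: continuity follows from uniform absolute convergence of the Bromwich integral in $t$ bounded away from $0$, while causality $c(t)=0$ for $t<0$ follows from analyticity of $C$ in $\mathbb C_+$ together with the decay $\|C(s)\|_{X,Y}=O(|s|^{\alpha-2})$, which permits closure of the contour to the right.
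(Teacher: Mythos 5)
There is no in-paper proof to compare you against: the paper imports this result verbatim from Sayas's monograph (Proposition 3.2.2 together with its errata) and only cites it. Your reconstruction follows the same strategy as the cited proof: use causality and the vanishing of $g,\dots,g^{(k+1)}$ at $t=0$ to write $A(s)\mathcal{L}\{g\}(s)=\bigl[A(s)s^{-k}(1+s)^{-2}\bigr]\mathcal{L}\{\mathcal{P}_2g^{(k)}\}(s)$, estimate the inverse transform of the now integrable symbol on a vertical contour $\mathrm{Re}(s)=\sigma_0$ (the $\xi$-integral reducing to a Beta function, whence $\Gamma(\epsilon/2)/\Gamma((\epsilon+1)/2)$), choose $\sigma_0$ in terms of $t$, and finish with the causal $L^\infty$--$L^1$ convolution bound; causality by closing the contour to the right using the $O(|s|^{\alpha-2})$ decay is also the standard argument. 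Structurally this is sound, and it produces $\int_0^t\|(\mathcal{P}_2g^{(k)})(\tau)\|_X\,d\tau$, which is what the statement should say (the $\int_0^1$ in the quoted theorem is a typo).

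Two points of your constant bookkeeping do not hold up, however. On the contour one has $|1+s|^2=(1+\sigma_0)^2+\xi^2$ \emph{exactly}, so the inequality you invoke to ``produce'' the factor $2^{\alpha}$ is an identity and produces nothing; meanwhile the factor $e^{\sigma_0 t}\le e$ silently disappears from your ``precise universal constant,'' so the computation you describe yields a constant of the order of $e$ rather than the stated $2^{\alpha}\le 2$, i.e.\ the exact prefactor is not recovered as claimed. Second, the abscissa choice $\sigma_0=\min\{1,1/t\}$ is the wrong way around for $t<1$: it gives $C_A(1)$, and since $C_A$ is non-increasing, $C_A(1)\ge C_A(t^{-1})$, so you cannot pass to the stated bound with $C_A(t^{-1})$; the uniform choice $\sigma_0=1/t$ requires no case split and gives $e^{\sigma_0t}=e$, $C_A(t^{-1})$, and $(1+1/t)^{\alpha-1}=\bigl(t/(1+t)\bigr)^{\epsilon}$ in one stroke. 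Neither defect affects the structure of the estimate---you do obtain $C_{\epsilon}(t)\,C_A(t^{-1})\int_0^t\|\mathcal{P}_2g^{(k)}\|_X$ up to a universal constant, which is all the paper needs for its $\lesssim$ statements downstream---but the claim of recovering the exact constant $2^{\alpha}$ is unsubstantiated as written.
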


%
% ====================================================================
\subsection{Well posedness in the Time-Domain}\label{sec:TDWellPosendenss}
% ====================================================================

In order to establish the final time-domain results, we will now make use of the coercivity bounds from \cref{sec:ExistenceAndUniqueness} combined from with an application of \cref{pr:5.1} to the solutions of the combined problems. The time-domain results will be summarized in the following two theorems:
\begin{theorem}[The Dirichlet-Neumann problem]
Consider the vector of boundary data $ \boldsymbol{\mathcal{F}}_{DN} := ({\bf F}(x,t), G(x,t))$. The following two statements hold: \\

{\em (a)} For the densities $( \boldsymbol{\lambda}, \varphi ) \in \mathbf H^{-1/2}(\Gamma) \times \mathrm H^{1/2}(\Gamma)$. \\

If $\boldsymbol{\mathcal{F}}_{DN} \in \mathcal{C}^5(\mathbb{R}, \mathbf H^{1/2} (\Gamma)\times \mathrm H^{-1/2} (\Gamma) ) $ is causal and $\boldsymbol{\mathcal{F}}_{DN}^{(6)} $ is integrable, then $\mathcal{L}^{-1}\{(\boldsymbol{\lambda}, \varphi)\} \in \mathcal{C}(\mathbb{R}, \mathbf H^{-1/2} (\Gamma)\times \mathrm H^{1/2} (\Gamma) )$ is causal and 
\begin{align}
\nonumber
\|\mathcal{L}^{-1}\{( \boldsymbol{\lambda}, \varphi ) \} \|&_{\mathbf H^{-1/2} (\Gamma)\times \mathrm H^{1/2} (\Gamma)} \\
\label{eq:5.17}
&\lesssim\frac{t} {(1+t) } 
\,t\,\max \{1, t^5 \} \int_0^t \|\mathcal{P}_2 \boldsymbol{\mathcal{F}}_{DN}^{(4)}(\tau) \|_ {\mathbf H^{1/2} (\Gamma)\times \mathrm H^{-1/2} (\Gamma) } \,d \tau.
\end{align}

{\em (b)} For the solution $ (\mathbf{U}(x,t), \Theta(x,t) ) = \mathcal{L}^{-1} \{  \mathcal{Q}_{SD}(s) \circ\mathcal{C}_{SD}^{-1}(s)\} \ast \boldsymbol{\mathcal{F}}_{DN}$. \\

If $\boldsymbol{\mathcal{F}}_{DN} \in \mathcal{C}^4(\mathbb{R},\mathbf H^{1/2} (\Gamma)\times \mathrm H^{-1/2} (\Gamma))$ is causal and $\boldsymbol{\mathcal{F}}_{DN} ^{(5)}$ is integrable, then $ ({\bf U}(x,t), {\Theta(x,t)} )\in 
\mathcal{C}(\mathbb{R}, \mathbf H^1(\Omega^\mp)\times\mathrm H^1(\Omega^\mp))$ is causal and 
\begin{align}
\nonumber
\|({\bf U}(\cdot ,t), \Theta(\cdot,t) )\|&_{\mathbf H^1(\Omega^\mp)\times\mathrm H^1(\Omega^\mp)} \\
\label{eq:5.18}
&\lesssim \frac{t}{(1 + t)}\,t\,\max \{1, t^{4\frac{1}{2}} \} \int_0^t \|\mathcal{P}_2 \boldsymbol{\mathcal{F}}_{DN}^{(3)}(\tau) \|_{\mathbf H^{1/2} (\Gamma)\times \mathrm H^{-1/2} (\Gamma) } \,d \tau. 
\end{align}
\end{theorem}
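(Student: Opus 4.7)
The plan is to derive both estimates by applying the Sayas convolution inequality \cref{pr:5.1} to two operator-valued symbols in $\mathbb{C}_+$: for part (a) the symbol is $s\mapsto\mathcal{C}_{SD}^{-1}(s)$, and for part (b) it is the composition $s\mapsto\mathcal{Q}_{SD}(s)\mathcal{C}_{SD}^{-1}(s)$. In each case the task reduces to (i) identifying a polynomial bound $\|A(s)\|\lesssim C_A(\sigma)\,|s|^\mu$ with $C_A$ non-increasing and dominated by $c\sigma^{-m}$ on $(0,1]$, and then (ii) selecting $k\in\mathbb{Z}_{\ge 0}$ and $\alpha\in[0,1)$ with $\mu=k+\alpha$ so that the regularity hypothesis on $\boldsymbol{\mathcal{F}}_{DN}$ matches $g\in C^{k+1}$ with $g^{(k+2)}$ integrable. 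The time-domain weight of the form $\tfrac{t}{1+t}\cdot t\cdot\max\{1,t^{q}\}$ then emerges once $C_\epsilon(t)\,C_A(t^{-1})$ is computed in the two regimes $t\ge 1$ and $t<1$.

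For part (a), \cref{lem:InverseBounds} already provides $\|\mathcal{C}_{SD}^{-1}(s)\|\lesssim |s|^{4}/(\sigma\underline{\sigma}^{5})$, placing the symbol in $\mathcal{A}(4,\mathcal{B}(\cdot,\cdot))$ with $C_A(\sigma)=(\sigma\underline{\sigma}^{5})^{-1}$. Choose $k=4$ and $\alpha=0$ (hence $\epsilon=1$); this matches the regularity hypothesis $\boldsymbol{\mathcal{F}}_{DN}\in C^{5}$ with integrable sixth derivative. A direct calculation yields $C_\epsilon(t)\propto t/(1+t)$ and $C_A(t^{-1})=t\,\max\{1,t^{5}\}$, whose product is precisely the weight in \eqref{eq:5.17}.

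For part (b), a naive composition $\|\mathcal{Q}_{SD}(s)\|\cdot\|\mathcal{C}_{SD}^{-1}(s)\|$ would push $\mu$ up to $6$ and would force an untenable $C^{7}$ hypothesis on the data. To avoid this, revisit the fundamental energy identity
\[
\frac{\sigma\underline{\sigma}}{|s|}\,\triple{(\mathbf{u}_{\boldsymbol{\lambda}},\theta_{\varphi})}^{2}_{|s|,\mathbb{R}^{d}\setminus\Gamma}\lesssim \mathrm{Re}\,\langle\overline{(\boldsymbol{\lambda},\varphi)},\,Z(s)\mathcal{C}_{SD}(\boldsymbol{\lambda},\varphi)\rangle_{\Gamma}
\]
derived inside the proof of \cref{lem:InverseBounds}, and specialise it to $(\boldsymbol{\lambda},\varphi)=\mathcal{C}_{SD}^{-1}(s)(\mathbf{f},g)$. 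Cauchy--Schwarz on the right-hand side, combined with the uniform bound $\|Z(s)\|\lesssim |s|/\underline{\sigma}$ (which follows from $\max\{|s|,1\}\le |s|/\underline{\sigma}$ on $\mathbb{C}_{+}$) and \eqref{eq:BoundInverseCSD}, yields
\[
\triple{(\mathbf{u}_{\boldsymbol{\lambda}},\theta_{\varphi})}_{|s|}\lesssim \frac{|s|^{3}}{\sigma\,\underline{\sigma}^{7/2}}\,\|(\mathbf{f},g)\|.
\]
Applying the norm equivalence \eqref{eq:4.6c} then converts this into $\|\mathcal{Q}_{SD}(s)\mathcal{C}_{SD}^{-1}(s)\|_{\mathbf{H}^{1}\times\mathrm{H}^{1}}\lesssim |s|^{3}/(\sigma\underline{\sigma}^{9/2})$, placing the composition in $\mathcal{A}(3,\mathcal{B}(\cdot,\cdot))$ with $C_A(\sigma)=(\sigma\underline{\sigma}^{9/2})^{-1}$. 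Invoking \cref{pr:5.1} with $k=3$, $\alpha=0$, $\epsilon=1$ gives $C_\epsilon(t)C_A(t^{-1})\propto \tfrac{t}{1+t}\cdot t\,\max\{1,t^{9/2}\}$, matching \eqref{eq:5.18}; the hypotheses $\boldsymbol{\mathcal{F}}_{DN}\in C^{4}$ with integrable fifth derivative correspond exactly to $g\in C^{k+1}$ and $g^{(k+2)}$ integrable for $k=3$.

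The main obstacle is the careful book-keeping in part (b): the half-integer exponent $9/2$ in $\underline{\sigma}^{9/2}$ stems from the uniform operator-norm bound on $Z(s)$ together with the square root that is unavoidable when extracting $\triple{\cdot}_{|s|}$ from the quadratic energy inequality. Preserving this $\underline{\sigma}^{-1/2}$ factor, rather than absorbing it into $|s|$ via $|s|\ge\underline{\sigma}$, is precisely what keeps $\mu=k=3$ and matches the $C^{4}$ regularity declared in the hypothesis. The remaining steps---Cauchy--Schwarz, the norm equivalence \eqref{eq:4.6c}, and the case-by-case evaluation of $C_\epsilon(t)C_A(t^{-1})$ on $[0,1]$ and $[1,\infty)$---are routine.
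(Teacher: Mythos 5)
Your proposal is correct and matches the paper's own argument: part (a) is the direct application of \cref{pr:5.1} to the bound \cref{eq:BoundInverseCSD} (your identification $k=4$, $\alpha=0$ is the one consistent with the stated data regularity and with the integrand $\mathcal{P}_2\boldsymbol{\mathcal{F}}_{DN}^{(4)}$; the ``$k=3$'' written in the paper's proof of (a) is a slip), and part (b) rests on exactly the same sharpened composition bound $\|\mathcal Q_{SD}(s)\mathcal C_{SD}^{-1}(s)\|\lesssim |s|^3/(\sigma\underline{\sigma}^{9/2})$ extracted from the coercivity relation \cref{eq:CSDLoweBound} rather than from the product of separate operator bounds. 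The only difference is cosmetic: to remove $\|(\boldsymbol\lambda,\varphi)\|$ from the duality pairing you reuse \cref{eq:BoundInverseCSD} and take a square root of the resulting quadratic inequality, whereas the paper bounds the densities by $\triple{(\mathbf u_{\boldsymbol\lambda},\theta_\varphi)}_{|s|,\mathbb{R}^d\setminus\Gamma}$ through \cref{eq:boundLambda} and the trace theorem and cancels one factor of the energy norm; both routes give the identical intermediate estimate $|s|^3/(\sigma\underline{\sigma}^{7/2})$, hence the same symbol class and the same weight in \cref{eq:5.18}.
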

\begin{proof}
The proof of (a) is a direct application of \cref{pr:5.1} and the coercivity estimate \cref{eq:BoundInverseCSD}, where the relevant spaces are $X= \mathbf H^{1/2}(\Gamma)\times\mathrm H^{-1/2}(\Gamma)$, and $Y=\mathbf H^{-1/2}(\Gamma)\times\mathrm H^{1/2}(\Gamma)$. From the estimate $\mathcal{C}_{SD}^{-1}(s)$ given in \cref{lem:InverseBounds} and following the notation introduced in \cref{pr:5.1} it follows that $\alpha = 0, k=3, \epsilon = 1$, and therefore
 \[
 C_{\epsilon} = c\,\frac{t} {(1+t)}, \qquad C_A(t^{-1}) = c\, t\max \{1, t^5\},
 \]
which proves \cref{eq:5.17}. 

For the case (b) we consider the spaces $X= \mathbf H^{-1/2}(\Gamma)\times \mathrm H^{1/2}(\Gamma)$, and  $Y= \mathbf H^{1}(\Omega^\mp)\times \mathrm H^1(\Omega^\mp)$. A direct estimation of the bounds for the composition of operators $\mathcal Q_{SD}(s)\mathcal C^{-1}_{SD}(s)$ of the form $\|\mathcal Q_{SD}(s)\mathcal C^{-1}_{SD}(s)\|\leq\|\mathcal Q_{SD}(s)\|\,\|\mathcal C^{-1}_{SD}(s)\|$ would result in an over estimation and tighter regularity requirements on the problem data. In order to obtain a sharper bound for the operator $\mathcal Q_{SD}(s)\mathcal C^{-1}_{SD}(s)$, we will start from \cref{eq:CSDLoweBound}
\begin{align*}
\triple{( \mathbf u_{\bm \lambda}, \theta_{\varphi}  )}^2_{|s|, \mathbb{R}^d\setminus \Gamma}  \lesssim\,& \frac{|s|}{\sigma\underline\sigma}\mathrm{Re} \;  \mathcal{A}_{\mathbb{R}^d\setminus \Gamma} \Big(\mathbf (\overline{\mathbf u}_{\bm \lambda},\theta_{\varphi}),Z(s)(\mathbf u_{\bm \lambda}, \overline{\theta}_{\varphi})^\top\Big)\\
=\,& \frac{|s|}{\sigma\underline\sigma}\mathrm{Re}\langle (\overline{\boldsymbol\lambda,\varphi}),Z(s)\mathcal C_{SD}(\boldsymbol\lambda,\varphi)\rangle_\Gamma \\
\lesssim\,& \frac{|s|^2}{\sigma\underline \sigma^2}\|(\boldsymbol \lambda,\varphi)\|_{\mathbf H^{-1/2}(\Gamma)\times\mathrm H^{1/2}(\Gamma)}\,\|(\mathbf f,g)\|_{\mathbf H^{1/2}(\Gamma)\times\mathrm H^{-1/2}(\Gamma)}\\
\lesssim\,&\frac{|s|^3} {\sigma\underline{\sigma}^{3\frac{1}{2}}} 
 \triple{( \mathbf u_{\bm \lambda}, \theta_{\varphi} )}_{|s|, \mathbb{R}^d\setminus \Gamma}\,\|(\mathbf f,g)\|_{\mathbf H^{1/2}(\Gamma)\times\mathrm H^{-1/2}(\Gamma)}
\end{align*}
where we have used the fact that $\|Z(s)\|\lesssim |s|/\underline\sigma$, the inequality \cref{eq:boundLambda} and the boundedness of the trace. One further application of \cref{eq:4.6} yields
\[
\triple{( \mathbf u_{\bm \lambda}, \theta_{\varphi}  )}_{1, \mathbb{R}^d\setminus \Gamma} \lesssim  \frac{|s|^3} {\sigma\underline{\sigma}^{4\frac{1}{2}}} 
 \,\|(\mathbf f,g)\|_{\mathbf H^{1/2}(\Gamma)\times\mathrm H^{-1/2}(\Gamma)}.
\]
Hence, 
\[
\|\mathcal Q_{SD}(s)\mathcal C^{-1}_{SD}(s)\|_{\mathbf H^{-1/2}(\Gamma)\times\mathrm H^{1/2}(\Gamma)\rightarrow \mathbf H^{1}(\Omega^\mp)\times\mathrm H^{1}(\Omega^\mp)}\leq \frac{|s|^3} {\sigma\underline{\sigma}^{4\frac{1}{2}}}.
\]
We can now use \cref{pr:5.1} with $\alpha =0 , k=3 , \epsilon = 1$ and
\[
C_\epsilon = C\frac{t}{1+t}, \quad C_A(t^{-1}) = ct\max\{1,t^{4\frac{1}{2}}\},
\]
arriving at \cref{eq:5.18}.
\end{proof}
The corresponding result for the Neumann-Dirichlet problem is stated in the following theorem.
\begin{theorem}[The Neumann - Dirichlet problem] Let $\boldsymbol{\mathcal{F}}_{ND} := ({\bf G}(x,t), F(x,t))$. \\

{\em (a)} For the densities $( \boldsymbol{\phi}, \varsigma) \in \mathbf H^{1/2}(\Gamma) \times \mathrm H^{-1/2}(\Gamma)$. \\

If $ \boldsymbol{\mathcal{F}}_{ND} \in \mathcal{C}^5 ( \mathbb{R}, \mathbf H^{-1/2} (\Gamma)\times \mathrm H^{1/2} (\Gamma) )$ is causal and $\boldsymbol{\mathcal{F}}_{ND}^{(6)} $ is integrable, then $\mathcal{L}^{-1}\{(\boldsymbol{\phi}, \varsigma)\} \in \mathcal{C}( \mathbb{R}, \mathbf H^{1/2} (\Gamma)\times \mathrm H^{-1/2} (\Gamma))$ is causal and 
\begin{align}
\nonumber
\|\mathcal{L}^{-1}\{( \boldsymbol{\phi}, \varsigma ) \} \|&_{\mathbf H^{1/2}(\Gamma)\times\mathrm H^{-1/2}(\Gamma) } \\
\label{eq:5.20}
&\lesssim \frac{t} {(1+ t) } 
 \,t\,\max \{1, t^6 \} \int_0^t \|\mathcal{P}_2 \boldsymbol{\mathcal{F}}_{ND}^{(4)}(\tau) \|_ {\mathbf H^{-1/2} (\Gamma)\times \mathrm H^{1/2} (\Gamma) } \,d \tau.
\end{align}

{\em (b)} For the solution $ ({\bf U}(x,t), \Theta(x,t) ) = \mathcal{L}^{-1} \{  \mathcal{Q}_{DS} \circ\mathcal{C}_{DS}^{-1}(s) \} \ast \boldsymbol{\mathcal F}_{DS}$. \\

If $ \boldsymbol{\mathcal{F}}_{ND} \in \mathcal{C}^{4}( \mathbb{R}, \mathbf H^{-1/2} (\Gamma)\times \mathrm H^{1/2} (\Gamma) )$ is causal and $\boldsymbol{\mathcal{F}}_{ND}^{(5)} $ is integrable, then $({\bf U}(x,t), \Theta(x,t) )\in \mathcal C(\mathbb R, \mathbf{H}^1(\Omega^\mp)\times \mathrm{H}^1(\Omega^\mp))$ is causal and we have the estimate: 
\begin{align}
\nonumber
\|({\bf U}(\cdot,t)\}, {\Theta(\cdot,t)} )\|&_{\mathbf{H}^1(\Omega^\mp)\times \mathrm{H}^1(\Omega^\mp))} \\
\label{eq:5.21}
& \lesssim \frac{t}{(1 + t)} \,t\,\max \{1, t^{5} \} \int_0^t \|\mathcal{P}_2\boldsymbol{ \mathcal{F}}_{ND}^{(3)}(\tau) \|_ {\mathbf H^{-1/2} (\Gamma)\times \mathrm H^{1/2} (\Gamma) } \,d \tau. 
\end{align}
\end{theorem}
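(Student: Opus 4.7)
The plan is to mirror the proof of the Dirichlet--Neumann theorem, treating parts (a) and (b) separately through applications of \cref{pr:5.1}.

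For part (a), I would apply \cref{pr:5.1} directly to the symbol $\mathcal{C}_{DS}^{-1}(s)$, regarded as a map from $X := \mathbf{H}^{-1/2}(\Gamma)\times \mathrm{H}^{1/2}(\Gamma)$ to $Y := \mathbf{H}^{1/2}(\Gamma)\times \mathrm{H}^{-1/2}(\Gamma)$. The estimate \cref{eq:BoundInverseCDS} in \cref{lem:InverseBounds} places $\mathcal{C}_{DS}^{-1}$ in the admissible class $\mathcal{A}(\mu,\mathcal{B}(X,Y))$ with $\mu=4$. Following the conventions of \cref{pr:5.1}, I would set $k=4$, $\alpha=0$, $\epsilon = 1-\alpha = 1$, and $C_A(\sigma)\lesssim \sigma^{-7}$ on $(0,1]$. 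Evaluated at $\sigma = t^{-1}$ these yield $C_A(t^{-1})\lesssim t\,\max\{1,t^{6}\}$ and $C_\epsilon(t)\lesssim t/(1+t)$; plugging everything into \cref{pr:5.1} with $g=\boldsymbol{\mathcal F}_{ND}$ produces \cref{eq:5.20} at once.

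The hard part will be part (b), where the naive factorization $\|\mathcal{Q}_{DS}(s)\mathcal{C}_{DS}^{-1}(s)\|\leq \|\mathcal{Q}_{DS}(s)\|\,\|\mathcal{C}_{DS}^{-1}(s)\|$ would overestimate the growth in $|s|$ and demand two extra derivatives on the boundary data. The plan is to reuse the sharpening argument employed in the Dirichlet--Neumann case, starting from the coercivity identity \cref{eq:CDSCoercivityBound}. Applying Cauchy--Schwarz in the boundary pairing together with $\|Z(\bar s)\|\lesssim |s|/\underline{\sigma}$ and the identity $\mathcal{C}_{DS}(\boldsymbol\phi,\varsigma)=(\mathbf g,f)$ would give
\[
\triple{(\mathbf u_{\bm\phi},\theta_\varsigma)}^2_{|s|,\mathbb R^d\setminus\Gamma}\lesssim \frac{|s|^2}{\sigma\underline{\sigma}^2}\,\|(\mathbf g,f)\|_{\mathbf H^{-1/2}(\Gamma)\times\mathrm H^{1/2}(\Gamma)}\,\|(\boldsymbol\phi,\varsigma)\|_{\mathbf H^{1/2}(\Gamma)\times\mathrm H^{-1/2}(\Gamma)}.
\]
I would then absorb the density norms back into the energy norm of the solution: the trace theorem applied to $\boldsymbol\phi = \jump{\mathbf u_{\bm\phi}}$ combined with \cref{eq:4.6a} yields $\|\boldsymbol\phi\|_{\mathbf H^{1/2}(\Gamma)}\lesssim \underline{\sigma}^{-1}\triple{\mathbf u_{\bm\phi}}_{|s|,\mathbb R^d\setminus\Gamma}$, while \cref{eq:boundSigma} provides the analogous bound for $\|\varsigma\|_{\mathrm H^{-1/2}(\Gamma)}$. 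Cancelling one copy of $\triple{(\mathbf u_{\bm\phi},\theta_\varsigma)}_{|s|}$ and passing from $\triple{\cdot}_{|s|}$ to $\triple{\cdot}_{1}$ via \cref{eq:4.6c} would give
\[
\|\mathcal{Q}_{DS}(s)\mathcal{C}_{DS}^{-1}(s)\|_{\mathbf H^{-1/2}(\Gamma)\times\mathrm H^{1/2}(\Gamma)\to \mathbf H^{1}(\Omega^\mp)\times\mathrm H^{1}(\Omega^\mp)}\lesssim \frac{|s|^{3}}{\sigma\,\underline{\sigma}^{9/2}}.
\]

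This composite symbol belongs to $\mathcal{A}(3,\mathcal{B}(\cdot,\cdot))$ with $k=3$, $\alpha=0$, $\epsilon=1$, and $C_A(\sigma)\lesssim \sigma^{-11/2}$ on $(0,1]$, so that $C_A(t^{-1})\lesssim t\max\{1,t^{9/2}\}\leq t\max\{1,t^{5}\}$. A final invocation of \cref{pr:5.1} with $C_\epsilon(t)\lesssim t/(1+t)$ and the integrand $\mathcal P_2\boldsymbol{\mathcal F}_{ND}^{(3)}$ then delivers \cref{eq:5.21}. The only subtlety worth highlighting in this plan is the correct reading of the duality pairing $\langle Z(\bar s)\mathcal C_{DS}(\boldsymbol\phi,\varsigma),\overline{(\boldsymbol\phi,\varsigma)}\rangle_\Gamma$: since $\mathcal C_{DS}$ maps into $\mathbf H^{-1/2}(\Gamma)\times \mathrm H^{1/2}(\Gamma)$, which is the topological dual of the density space, no additional regularity loss arises beyond the factor $|s|/\underline{\sigma}$ already carried by $Z(\bar s)$.
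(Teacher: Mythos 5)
Your proposal is correct and follows essentially the same route as the paper: part (a) is a direct application of \cref{pr:5.1} to the bound \cref{eq:BoundInverseCDS} (your choice $k=4$, $\alpha=0$ is the one consistent with the regularity demanded in the statement), and part (b) sharpens the composite symbol $\mathcal Q_{DS}(s)\mathcal C_{DS}^{-1}(s)$ via the coercivity identity \cref{eq:CDSCoercivityBound}, absorbing the density norms through the trace theorem, \cref{eq:4.6a} and \cref{eq:boundSigma}, exactly as the paper does. The only quibble is a bookkeeping slip in your intermediate exponent: the trace contribution $\|\boldsymbol\phi\|_{\mathbf H^{1/2}(\Gamma)}\lesssim \underline{\sigma}^{-1}\triple{\mathbf u_{\boldsymbol\phi}}_{|s|,\mathbb R^d\setminus\Gamma}$ is not dominated by $|s|\,\underline{\sigma}^{-3/2}\triple{(\mathbf u_{\boldsymbol\phi},\theta_\varsigma)}_{|s|,\mathbb R^d\setminus\Gamma}$ when $|s|<\underline{\sigma}^{1/2}$ (one only has $\underline{\sigma}\le |s|$), so the uniform absorption factor is $|s|\,\underline{\sigma}^{-2}$ and the composite bound is $|s|^{3}/(\sigma\underline{\sigma}^{5})$, as in \cref{eq:5.22}, rather than your $|s|^{3}/(\sigma\underline{\sigma}^{9/2})$; since you relax to $t\max\{1,t^{5}\}$ anyway, this does not affect \cref{eq:5.21}.
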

\begin{proof}
As in the Dirichlet-Neumann case, the bound \cref{eq:5.20} on (a) follows immediately from the one for $\mathcal{C}_{DS}^{-1}(s)$  given by \cref{eq:BoundInverseCDS} in \cref{lem:InverseBounds}. Then, by identifying the spaces $X = \mathbf H^{-1/2} (\Gamma)\times \mathrm H^{1/2} (\Gamma) $ and $Y = \mathbf H^{1/2} (\Gamma)\times \mathrm H^{-1/2} (\Gamma)$, together with $\alpha = 0, \,k = 3$, and $\epsilon = 1$,  \cref{pr:5.1} can be applied leading to \cref{eq:5.20}.

To prove (b), we first need to derive a tighter bound for the composition of operators  $\mathcal{Q}_{DS}(s) \mathcal{C}_{DS}^{-1}(s)$, as the product of the separate bounds for each of the two operators, would yield an unnecessarily loose estimate. Instead, we recall \cref{eq:CDSCoercivityBound} 
\[
\frac{\sigma\underline{\sigma}}{|s|}\,\triple{( \mathbf u_{\boldsymbol\phi}, \theta_\varsigma)}^2_{|s|, \mathbb{R}^d\setminus \Gamma}  \lesssim \mathrm{Re}\langle Z(\overline{s})\mathcal C_{DS} (\boldsymbol\phi,\varsigma),\overline{(\boldsymbol\phi,\varsigma)}\rangle_\Gamma,
\]
which implies that 
\begin{align}
\nonumber
\frac{\sigma\underline{\sigma}}{|s|} \triple{( \mathbf u_{\phi}, \theta_{\varsigma} )}^2_{|s|, \mathbb{R}^d\setminus \Gamma}
\lesssim\,& \frac{|s|}{\underline\sigma}\|\mathcal{C}_{DS}( \mathbf u_{\boldsymbol\phi}, \theta_{\varsigma} )\|_{ \mathbf H^{-1/2}(\Gamma)\times \mathrm H^{1/2}(\Gamma) }\|(\boldsymbol\phi,\varsigma)\|_{\mathbf H^{1/2}(\Gamma)\times \mathrm H^{-1/2}(\Gamma)} \\
\nonumber
=\,& \frac{|s|}{\underline\sigma}\|( \mathbf g , f ) \|_{\mathbf H^{-1/2}(\Gamma)\times \mathrm H^{1/2}(\Gamma)}\,\|(\boldsymbol\phi,\varsigma)\|_{\mathbf H^{1/2}(\Gamma)\times \mathbf H^{-1/2}(\Gamma)}\\
\nonumber
\lesssim\,& \frac{|s|^2}{\underline\sigma^3}\,\|( \mathbf g , f ) \|_{\mathbf H^{1/2}(\Gamma)\times \mathrm H^{-1/2}(\Gamma)}\,\triple{( \mathbf u_{\phi}, \theta_{\varsigma} )}_{|s|, \mathbb{R}^d\setminus \Gamma},
\end{align}
where $\|Z(s)\|\lesssim |s|/\underline\sigma$ was used in the first inequality and for the last inequality we used \cref{eq:4.6a}, \cref{eq:boundSigma} and the trace theorem. Hence, using \cref{eq:4.6c} we have 
\[
 \triple{( \mathbf u_{\phi}, \theta_{\varsigma} ) }_{1, \mathbb{R}^d\setminus \Gamma} \lesssim \frac{|s|^3}{\sigma\underline{\sigma}^5}
 \|( \mathbf g , f ) \|_{\mathbf H^{1/2}(\Gamma)\times \mathrm H^{-1/2}(\Gamma)},
\]
from which we can conclude that
\begin{equation}\label{eq:5.22}
 \|\mathcal{Q}_{DS}(s) \mathcal{C}_{DS}^{-1}(s)\|_{\mathbf H^{-1/2}(\Gamma)\times \mathrm H^{1/2}(\Gamma) \rightarrow \mathbf H^1(\Omega^\mp)\times\mathrm H^1(\Omega^\mp)}
\leq c \frac{|s|^3}{\sigma\underline{\sigma}^5}. 
\end{equation}
Finally, \cref{eq:5.21} follows from an application of  \cref{pr:5.1} extracting the information $X = \mathbf H^{-1/2}(\Gamma)\times \mathrm H^{1/2}(\Gamma),\, Y = \mathbf H^1(\Omega^\mp)\times\mathrm H^1(\Omega^\mp),\, \alpha = 0,\,k = 3$, and $\epsilon = 1$ from \cref{eq:5.22} above.
\end{proof}

%
% ==================================
\section{Acknowledgements}
% ==================================
%
Tonatiuh S\'anchez-Vizuet was partially funded by the US Department of Energy. Grant No. DE-FG02-86ER53233.

%
% ==================
\appendix
% ==================

% ===================================
\section{Fundamental solutions} \label{sec:FundamentalSolutions}
% ====================================
%
For completeness, we present the fundamental solution of eq. \cref{eq:AdjointThermoealsticMatrix} in two and three dimensions. We refer the reader to \cite{HsSa:2020} where a detailed derivation is presented following H\"{o}rmander's method \cite{Ho:1964}.

\paragraph{\textbf{Fundamental solution in 3 dimensions}}
Where the constants $\lambda_1^2, \lambda_2^2, \lambda_3^2$ satisfy the dispersion relations
\begin{alignat}{6}
\lambda_1^2 + \lambda_2^2 &=\frac{s}{\kappa} + \frac{\gamma \,\eta\, s} {\lambda +2 \mu} + \lambda_p^2,  &\qquad \qquad& \lambda_p^2 = \frac{\rho \,s^2} {\lambda + 2 \mu}, \label{eq:A.3}\\
\lambda_1^2 \,\lambda_2^2 &= \frac{s}{\kappa}\, \lambda_p^2,  &\qquad \qquad& \lambda_3^2 = \frac{\rho \,s^2}{\mu}. \nonumber
\end{alignat}
The three-dimensional fundamental solution for the thermoelastic oscillation operator is defined as
\begin{equation}\label{eq:A.11}
\underline {\underline{\bf E}}(x,y;s) = \sum_{k=1}^3 \mathbf{D}_k(x, s) \frac{e^{-\lambda_k |x - y|}}{4 \pi |x - y |}, 
\end{equation}
where $\mathbf{D}_k(x, s)'s $ are matrices of differential operators given by
\begin{eqnarray} 
\mathbf{D}_1(x, s)&:=& \frac{1}{\rho s^2 (\lambda_1^2 - \lambda_2^2)}
 \left (
\begin{array} {l|l} 
\hspace{2mm}\;(\lambda^2_p - \lambda^2_2) \,\nabla \nabla^{\top}
&\hspace{4mm} \gamma \, \lambda^2_p \;\nabla \\ [2mm] \hline\\
\hspace{4mm} s\, \eta \,\lambda^2_p \,\nabla^{\top} & \hspace{4mm} \rho\, s^2\, (\lambda^2_1 - \lambda^2_p) 
\end{array} \right ), \\[4mm]
\mathbf{D}_2(x, s)&:=& \frac{1}{\rho s^2 (\lambda_2^2 - \lambda_1^2)}
 \left (
\begin{array} {l|l} 
\hspace{2mm}\;(\lambda^2_p - \lambda^2_1)\, \nabla \nabla^{\top}
& \hspace{4mm} \gamma \, \lambda^2_p \;\nabla \\ [2mm] \hline\\
\hspace{4mm} s\,\eta \,\lambda^2_p \,\nabla^{\top} &\hspace{4mm} \rho \,s^2\, (\lambda^2_2 - \lambda^2_p) 
\end{array} \right ), \\[1mm]
\mathbf{D}_3(x, s)&:=& \frac{1}{\rho s^2}
 \left (
\begin{array} {l|l} 
\hspace{2mm}\;\lambda^2_3 \;\underline{\underline{\bf I}} - \nabla \nabla^{\top}\hspace{2mm}
&\hspace{4mm} {\mathbf 0} \\ [2mm] \hline\\
{\hspace{8mm}\mathbf 0} &\hspace{4mm} {\mathbf 0}
\end{array} \right ). 
 \end{eqnarray}
%
% ===========================================================
\paragraph{\textbf{Fundamental solution in two dimensions}}
% ===========================================================
%
The decomposition of the fundamental solution in the two dimensional case is similar to its three-dimensional counterpart. However, in the 2-D case it is given in terms of modified Bessel functions of the second kind $K_0(\lambda_k|x-y|)$, also known as Macdonald functions. The fundamental solution is given by
\[
\underline {\underline{\bf E}}(x,y) = \sum_{k=1}^3 \mathbf{D}_k(x, s)\frac{1}{2\pi} K_0({\lambda_k |x - y|}), 
\]
where  the matrices of operators $\mathbf{D}_k(x, s) $ are defined as
\begin{eqnarray} 
\mathbf{D}_1(x, s)&:=& \frac{1}{\rho s^2 (\lambda_1^2 - \lambda_2^2)}
 \left (
\begin{array} {l|l} 
\hspace{2mm}\;(\lambda^2_p - \lambda^2_2) \,\nabla \nabla^{\top}
&\hspace{4mm} \gamma \, \lambda^2_p \;\nabla \\ [2mm] \hline\\
\hspace{4mm} s\, \eta \,\lambda^2_p \,\nabla^{\top} & \hspace{4mm} \rho\, s^2\, (\lambda^2_1 - \lambda^2_p) 
\end{array} \right ), \label{eq:A.201} \\[4mm]
\mathbf{D}_2(x, s)&:=& \frac{1}{\rho s^2 (\lambda_2^2 - \lambda_1^2)}
 \left (
\begin{array} {l|l} 
\hspace{2mm}\;(\lambda^2_p - \lambda^2_1)\, \nabla \nabla^{\top}
& \hspace{4mm} \gamma \, \lambda^2_p \;\nabla \\ [2mm] \hline\\
\hspace{4mm} s\,\eta \,\lambda^2_p \,\nabla^{\top} &\hspace{4mm} \rho \,s^2\, (\lambda^2_2 - \lambda^2_p) 
\end{array} \right ), \\[4mm]
\mathbf{D}_3(x, s)&:=&- \frac{1}{\rho s^2}
 \left (
\begin{array} {l|l} 
\hspace{2mm}\;\nabla \nabla^{\top}- \lambda^2_3 \;\underline{\underline{\bf I}}\hspace{2mm}
&\hspace{4mm} {\mathbf 0} \\ [2mm] \hline\\
{\hspace{8mm}\mathbf 0} &\hspace{4mm} {\mathbf 0}
\end{array} \right ),
 \end{eqnarray}
and the constants $\lambda_1^2, \lambda_2^2, \lambda_3^2$ satisfy 
\begin{alignat*}{6}
\lambda_1^2 + \lambda_2^2 =\,& \frac{s}{\kappa} + \frac{\gamma \,\eta\, s} {\lambda +2 \mu} + \lambda_p^2, &\quad \qquad& \lambda_p^2 =\,& \frac{\rho \,s^2} {\lambda + 2 \mu}, \\
\lambda_1^2 \,\lambda_2^2 =\,& \frac{s}{\kappa}\, \lambda_p^2, &\quad \qquad& \lambda_3^2 =\,& \frac{\rho \,s^2}{\mu}.
\end{alignat*}
%
%  =============================
\paragraph{\textbf{Remark.}}
% ==============================
We note that for the adjoint equation, if we let $\underline {\underline{\bf E}}^{*}(x, y;s)$ be the fundamental solution such that 
\begin{equation*}
\mathbf{B}^{*}(\partial_y, s) \underline {\underline{\bf E}}^{*}(x,y;s) = - \delta (y-x) \underline{\underline{\bf I}},
\end{equation*}
then we have 
\[
 \underline {\underline{\bf E}}^{*}(x,y;s) = \underline {\underline{\bf E}}^{\top}( x, y; s), 
\]
where $\underline {\underline{\bf E}}^{\top}( x, y ;s) $ is obtained from $\underline {\underline{\bf E}}(x, y; s)$
by transposing the rows and columns (see \cite{Ku:1979}, p.96, and \cite{HsWe:2008}, p.131). The fundamental solutions given in \cite[p.95]{Ku:1979} (see also 
\cite{DaKo:1988} and \cite{Cakoni:2000}) for the sytstem of time-harmonic oscillation equations can be recovered from the one above by replacing $s$ and $\lambda^2_j $ by $- i~\omega$ and $-\lambda^2_j$, respectively.

% ============================= 
\bibliographystyle{siamplain}
\bibliography{References.bib}
% ==================
\end{document}